\setlist{noitemsep}
\definecolor{Code}{rgb}{0,0,0}
\definecolor{Decorators}{rgb}{0.5,0.5,0.5}
\definecolor{Numbers}{rgb}{0.5,0,0}
\definecolor{MatchingBrackets}{rgb}{0.25,0.5,0.5}
\definecolor{Keywords}{rgb}{0,0,1}
\definecolor{self}{rgb}{0,0,0}
\definecolor{Strings}{rgb}{0,0.63,0}
\definecolor{Comments}{rgb}{0,0.63,1}
\definecolor{Backquotes}{rgb}{0,0,0}
\definecolor{Classname}{rgb}{0,0,0}
\definecolor{FunctionName}{rgb}{0,0,0}
\definecolor{Operators}{rgb}{0,0,0}
\definecolor{Background}{rgb}{0.98,0.98,0.98}
\scriptsize\setstretch{1},
\newtheorem{theorem}{Theorem}[section]
\newtheorem{lemma}[theorem]{Lemma}
\newtheorem{corollary}[theorem]{Corollary}
\newtheorem{proposition}[theorem]{Proposition}
\theoremstyle{definition}
\newtheorem{definition}[theorem]{Definition}
\newcommand{\mcal}[1]{\ensuremath{\mathcal{#1}}}
\newcommand{\GF}[1]{\ensuremath{\mathrm{GF}(#1)}}
\newcommand{\delete}{\backslash}
\newcommand{\contract}{/}
\newcommand{\dash}{\nobreakdash-\hspace{0pt}}
\DeclareMathOperator{\si}{si}
\newcommand{\hydra}{\mathbb{H}}
\DeclareMathOperator{\assoc}{Asc}
\newcommand{\Q}{\mathbb{Q}}
\DeclareMathOperator{\core}{Core}
\begin{document}
  %%%%%%%%%%%%%%%%%%%%%%%%%%%%%%%%%%%%%%%%%%%%%%%%%%%%%%%%%%%%%%%%%%%%%
  \title{Computer-verification of the structure of some classes of fragile matroids}
  %%%%%%%%%%%%%%%%%%%%%%%%%%%%%%%%%%%%%%%%%%%%%%%%%%%%%%%%%%%%%%%%%%%%%
  \author{Carolyn Chun\footnote{School of Information Systems, Computing and Mathematics, Brunel University, United Kingdom. Email: \url{carolyn.chun@brunel.ac.uk}} \and Deborah Chun\footnote{Department of Mathematics, West Virginia University Institute of Technology, United States. Email: \url{deborah.chun@mail.wvu.edu}} \and Benjamin Clark\footnote{School of Mathematics, Statistics and Operations Research, Victoria University of Wellington, New Zealand. \url{{benjamin.clark,dillon.mayhew,geoff.whittle}@msor.vuw.ac.nz}} \and Dillon Mayhew\footnotemark[3] \and Geoff Whittle\footnotemark[3] \and Stefan H.M. van Zwam\footnote{Department of Mathematics, Louisiana State University, United States. Email: \url{svanzwam@math.lsu.edu}. Supported by the National Science Foundation, Grant No. 1161650/1501985.}}

% \author[Chun]{Carolyn Chun}
% \address{School of Information Systems, Computing and Mathematics,
% Brunel University, United Kingdom}
% \email{carolyn.chun@brunel.ac.uk}
% 
% \author[Mayhew]{Dillon Mayhew}
% \address{School of Mathematics, Statistics, and Operations Research,
% Victoria University of Wellington,
% New Zealand}
% \email{dillon.mayhew@msor.vuw.ac.nz}
% 
% \author[Whittle]{Geoff Whittle}
% \address{School of Mathematics, Statistics, and Operations Research,
% Victoria University of Wellington,
% New Zealand}
% \email{geoff.whittle@msor.vuw.ac.nz}
% 
% \author[Van Zwam]{Stefan H. M. van Zwam}
% \address{Department of Mathematics,
% Princeton University,
% United States}
% \email{svanzwam@math.princeton.edu}
% 
% 
% \subjclass{05B35}
\date{\today}

\maketitle

% \begin{abstract}
%   Fano-fragile with $N_{11}$, $\mathbb{H}_5$-fragile, the fan-part.
% \end{abstract}

\section{Introduction}
This technical report accompanies the papers \cite{CMWZ10,CMWZ,CCMZ}. It contains the computations necessary to verify some of the results claimed in those papers. We start by describing those results. To do so, we need the concept of \emph{gluing wheels to triangles}. We will delay the formal definitions until the next section, and point to Figure \ref{fig:growfans} to get an intuitive idea.

\begin{figure}[htbp]
	\centering
		\includegraphics[scale=0.8]{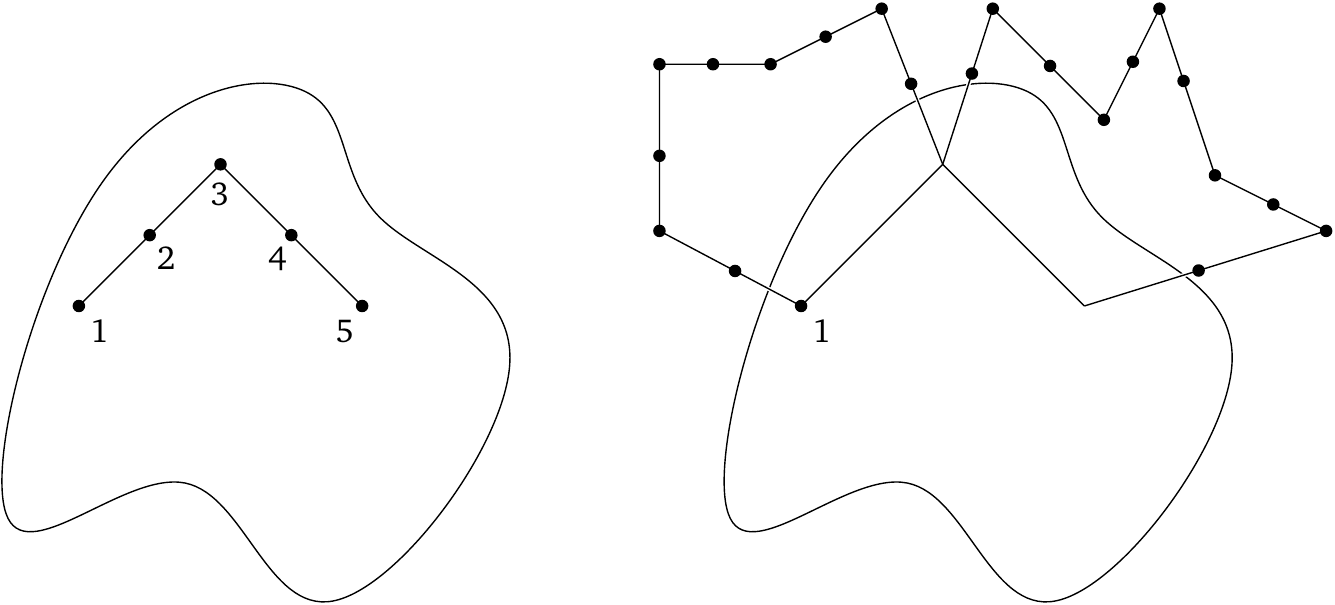}
	\caption{Gluing wheels onto the triangles $(1,2,3), (3,4,5)$. Elements $2$ and $4$ are always removed; elements $1,3,5$ may or may not be removed.}
	\label{fig:growfans}
\end{figure}

Let $\mathcal{N}$ be a set of matroids. A matroid $M$ is \emph{$\mathcal{N}$-fragile} if, for each $e \in E(M)$, at most one of $M\delete e, M\contract e$ has a minor isomorphic to a member of $\mathcal{N}$. If $M$ itself does have such a minor, then $M$ is \emph{strictly} $\mathcal{N}$-fragile.

\subsection{Fano-fragile matroids}
We say a matroid is \emph{Fano-fragile} if it is binary and is strictly $\{F_7, F_7^*\}$-fragile. In \cite{CMWZ10}, Chun, Mayhew, Whittle, and Van Zwam determine the structure of Fano-fragile matroids. There are several classes; in this report we deal with two of them. For the first, we look at matroids containing $R_{10}$ as a minor. It turns out that there is a unique (up to isomorphism) Fano-fragile single-element extension of $R_{10}$, which we will call $N_{11}$. We assume $N_{11}$ has a triangle $(0,10,4)$ such that $N_{11} / 10$ has a minor in $\{F_7, F_7^*\}$.

\begin{theorem}\label{thm:N11}
	Let $M$ be a 3-connected, Fano-fragile matroid having a minor isomorphic to $N_{11}$. Then $M$ is isomorphic to a matroid obtained from $N_{11}$ by gluing a wheel onto the triangle $(0,10,4)$.
\end{theorem}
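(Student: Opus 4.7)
The plan is to proceed by induction on $|E(M)|$, combining Seymour's Splitter Theorem with a finite computer search. The base case $M \cong N_{11}$ is immediate, taking the ``glued wheel'' to be trivial. Otherwise, since $M$ is 3-connected with a proper 3-connected minor $N_{11}$, and (once verified) $N_{11}$ is neither a wheel nor a whirl, the Splitter Theorem supplies an element $e$ such that $M' := M\delete e$ or $M' := M\contract e$ is 3-connected with an $N_{11}$-minor. Because Fano-fragility is inherited by minors, $M'$ is again Fano-fragile, and by induction $M'$ is obtained from $N_{11}$ by gluing a wheel onto $(0,10,4)$. Thus $M$ is a 3-connected Fano-fragile single-element extension or coextension of such an $M'$.

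The core claim, to be verified by computer, is then: every 3-connected Fano-fragile single-element extension or coextension of a matroid obtained from $N_{11}$ by gluing a wheel onto $(0,10,4)$ is itself of this form. The verification proceeds by enumerating, for each candidate $M'$, all 3-connected binary single-element extensions and coextensions, filtering by the Fano-fragile property, and checking by isomorphism that each survivor matches the expected matroid representing the next step of the wheel-gluing (or, in degenerate cases, the removal of such a step). In practice this is a loop over a short list of matroids, with each step reducing to membership tests in small families.

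The main obstacle is that the wheel can be arbitrarily large, so a literal enumeration cannot terminate. I would address this via a stabilization argument: once the glued wheel has at least a small number of rim elements, the local structure near the interior of the fan is rigid (each interior element lies in a triangle-triad pair), so the only places where a new element can be added or an old element can be modified, while preserving 3-connectivity and Fano-fragility, lie near the ``growing end(s)'' of the fan. This means the enumeration for large wheels reduces, by an explicit isomorphism on local neighborhoods, to the enumeration for a bounded range of small wheel sizes. The nontrivial structural input is therefore the stabilization statement, which matches the local geometry of the fan against the output of the enumeration; once this is in hand, the remaining verification is a purely mechanical check carried out in the sequel sections of this report.
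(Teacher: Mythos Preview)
Your induction-via-Splitter framework is sound in outline, but the entire difficulty of the theorem is packed into the ``stabilization argument'' you only sketch. As you acknowledge, the wheel can be arbitrarily large, so the inductive step requires you to control extensions and coextensions of infinitely many matroids, and your proposed fix---that new elements can only attach near the ends of the fan once the fan is long enough---is exactly the nontrivial structural fact that needs proof. You have not supplied one, and this is not a routine local-connectivity argument: one must rule out extensions or coextensions that interact with elements \emph{outside} the fan, or that span several fan elements simultaneously, and one must show that any admissible move is itself a fan-lengthening. So as written, the proposal has a genuine gap at precisely the point where the work lies.

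The paper's proof takes a different route that avoids your open-ended induction. Rather than climbing the wheel one element at a time, it first passes (via the Splitter Theorem) to the unique $3$-connected Fano-fragile coextension $N_{11}^{+}$ of $N_{11}$, and then invokes a general theorem from \cite{CCMZ} (Lemma~\ref{lem:fanlemma} here): if some $3$-connected matroid in the class with an $N_{11}^{+}$-minor fails to be a fan-extension of $N_{11}^{+}$ relative to the fan $(0,10,4,11)$, then there is already such a counterexample with at most two more elements than $N_{11}^{+}$. This reduces the verification to a single finite computer check on matroids of size at most $14$ (Lemma~A.1), after which Lemma~\ref{lem:wheelglue} converts ``fan-extension of $N_{11}^{+}$'' into ``obtained from $\core(N_{11}^{+})\cong N_{11}$ by gluing a wheel onto $(0,10,4)$''. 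In effect, Lemma~\ref{lem:fanlemma} is a rigorously proven, general version of your stabilization claim; the paper's approach buys a bounded, once-and-for-all case check in place of your per-wheel-size enumeration plus an ad hoc stabilization argument.
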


Note that Truemper \cite[p. 300]{TruVI} essentially stated this result, although without proof, and for a slightly more restricted class of matroids than the one we consider.

Let $(2,6,9), (8,0,9), (1,7,9)$ be triangles of $F_7$, with $\{6,0,7\}$ an independent set and $\{2,8,1\}$ a circuit. The matroid $N_{12}$ is obtained from $F_7$ by gluing rank-3 wheels to these three triangles, while deleting the elements $\{0,6,7,9\}$ (a more formal definition follows below). We prove the following:

\begin{theorem}\label{thm:N12}
	Let $M$ be a 3-connected, Fano-fragile matroid having a minor isomorphic to $N_{12}$. Then $M$ is isomorphic to a matroid obtained from $F_{7}$ by gluing wheels to the triangles $(2,6,9), (8,0,9), (1,7,9)$.
\end{theorem}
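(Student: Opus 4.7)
The plan is to argue by induction on $|E(M)| - |E(N_{12})|$, taking the base case $M \cong N_{12}$, which by its construction already lies in the claimed class (it is obtained from $F_7$ by gluing the smallest admissible wheels to $(2,6,9)$, $(8,0,9)$, $(1,7,9)$). For the inductive step, assume $M$ properly contains an $N_{12}$\dash minor, is 3\dash connected, and is Fano\dash fragile. Seymour's Splitter Theorem, applied with $N_{12}$ as the 3\dash connected minor in $M$, yields an element $e \in E(M) \setminus E(N_{12})$ such that $M \delete e$ or $M \contract e$ is 3\dash connected with an $N_{12}$\dash minor. Fano\dash fragility is inherited by the smaller matroid $M'$, since retaining $N_{12}$ as a minor means any Fano or dual\dash Fano minor of $M'\delete f$ or $M'\contract f$ already lifts to $M$, so fragility of $M$ forces fragility of $M'$. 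By induction, $M'$ is in the wheel\dash gluing class.

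It then remains to show that passing from $M'$ to $M$ corresponds to lengthening one of the three fans by a single element at its end, which keeps $M$ in the class. In other words, the theorem reduces to a \emph{local closure} statement: every 3\dash connected, Fano\dash fragile single\dash element extension or coextension of a matroid in the class, retaining the $N_{12}$\dash minor, is again in the class and differs from the parent only in one fan, extended by one element. This is where the computer verification enters: one exhaustively enumerates all single\dash element extensions and coextensions of a representative base matroid (namely $N_{12}$, and possibly a handful of short fan configurations to exhibit every possible interaction between a new element and a fan end), filters for 3\dash connectedness, Fano\dash fragility, and presence of an $N_{12}$\dash minor, and verifies that each survivor is precisely one of the permitted fan\dash extension moves.

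The main obstacle is justifying that a bounded computation suffices even though the wheel\dash gluing class is infinite. I would argue that away from the three fan ends the structure of $M'$ is rigid: the $F_7$\dash core and the attached fans determine the geometry so strongly that any newly added element must attach locally, near the end of one of the three fans, in a manner indistinguishable from attachment near the end of a very short fan. Making this locality precise is what allows one to replace ``all matroids in the class'' with finitely many computational templates; the bulk of the report is then the faithful implementation of 3\dash connectivity and fragility testing, the enumeration of extensions and coextensions of those templates, and the exhaustive verification that no exceptional 3\dash connected Fano\dash fragile extension escapes the wheel\dash gluing class.
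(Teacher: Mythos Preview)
Your overall architecture---reduce to a finite computer check, then verify---is the right one, but the reduction you propose has a genuine gap, and the paper closes that gap in a different and more efficient way.

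In your inductive step you need that every $3$\dash connected Fano\dash fragile single\dash element extension or coextension of an \emph{arbitrary} member $M'$ of the wheel\dash gluing class is again in the class. As you yourself note, this is an infinite family of checks, and you offer only a heuristic ``locality'' argument (``away from the three fan ends the structure of $M'$ is rigid'') for why finitely many templates should suffice. That is precisely the hard part, and it is not established. Making it precise amounts to proving that fan\dash extensions are closed under $3$\dash connected fragile (co)extensions, which is essentially the content of the external machinery the paper invokes.

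The paper avoids your induction entirely. It applies Lemma~\ref{lem:fanlemma} (Theorem~6.10 of \cite{CCMZ}): if some $3$\dash connected Fano\dash fragile matroid with an $N_{12}$\dash minor fails to be a fan\dash extension of $N_{12}$ relative to $\mathcal{F}_{N_{12}} = \{(2,6,9,3),(8,0,11,5),(1,7,10,4)\}$, then such a counterexample already exists with at most two more elements than $N_{12}$. Thus the entire finite check happens \emph{at $N_{12}$}, not at arbitrary matroids in the infinite class: one enumerates $3$\dash connected Fano\dash fragile matroids with $|E(M)| \le |E(N_{12})| + 2$ and confirms each is a fan\dash extension (this is Lemma~A.2). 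Then Lemma~\ref{lem:wheelglue} translates ``fan\dash extension of $N_{12}$'' into ``obtained from $\core(N_{12})$ by gluing wheels'', and one observes $\core(N_{12})$ is $F_7$ with the element $9$ tripled, which after simplification gives the stated triangles.

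So the missing ingredient in your proposal is exactly Lemma~\ref{lem:fanlemma}: a structural theorem, proved elsewhere, that bounds the size of a minimal non\dash fan\dash extension. Your locality intuition is the right instinct, but the paper outsources its rigorous form to \cite{CCMZ} rather than arguing it directly.
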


These matroids have previously appeared in work by Kingan and Lemos \cite{KL02} as the family $\mathcal{F}_1(m,n,r)$ (although in their family the elements $1,2,8,9$ are never deleted).

Moreover, we find representations of the matroids in Theorems \ref{thm:N11} and \ref{thm:N12} in terms of \emph{grafts}. Again, the graft representation for Theorem \ref{thm:N11} can be found in \cite[p.300]{TruVI}.

\subsection{$\{U_{2,5}, U_{3,5}\}$-fragile matroids}
Let $\mathcal{M}_5$ be the set of strictly $\{U_{2,5},U_{3,5}\}$-fragile matroids that are representable over the partial field $\mathbb{H}_5$ (see \cite{PZ08conf} for a definition). Let $\mathcal{M}_2$ be the set of strictly $\{U_{2,5},U_{3,5}\}$-fragile matroids that are representable over the partial field $\mathbb{U}_2$ (first introduced in \cite{Sem97}). In \cite{CMWZ}, Clark, Mayhew, Whittle, and Van Zwam determine the structure of the matroids in $\mathcal{M}_5$ and in $\mathcal{M}_2$. Most of that paper is taken up by a study of the structure of matroids having one of three specific matroids, $X_8, Y_8, Y_8^*$, as a minor. In this report we determine the structure of the remaining matroids in the class. We have the following:

\begin{theorem}\label{thm:H5characterization}
	Let $M' \in \mathcal{M}_5$ be 3-connected. Then $M'$ is isomorphic to a matroid $M$ for which one of the following holds:
	\begin{enumerate}
		\item $M$ has one of $X_8, Y_8, Y_8^*$ as a minor;
		\item $M \in \{U_{2,6}, U_{4,6}, P_6, M_{9,9}, M_{9,9}^*\}$;
		\item $M$ or $M^*$ can be obtained from $U_{2,5}$ (with groundset $\{a,b,c,d,e\}$) by gluing wheels to $(a,c,b), (a,d,b), (a,e,b)$;
		\item $M$ or $M^*$ can be obtained from $U_{2,5}$ (with groundset $\{a,b,c,d,e\}$) by gluing wheels to $(a,b,c), (c,d,e)$;
		\item $M$ or $M^*$ can be obtained from $M_{7,1}$ by gluing a wheel to $(1,3,2)$.
	\end{enumerate}
\end{theorem}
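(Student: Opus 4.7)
The strategy is to verify the classification by computer enumeration of the 3-connected matroids in $\mathcal{M}_5$ having no $X_8$-, $Y_8$-, or $Y_8^*$-minor, starting from the smallest matroids in the class and proceeding by single-element $\mathbb{H}_5$-extensions and coextensions. At each stage one must check (i) 3-connectivity, (ii) strict $\{U_{2,5},U_{3,5}\}$-fragility, by examining all single-element deletions and contractions, and (iii) the absence of an $X_8$-, $Y_8$-, or $Y_8^*$-minor; matroids failing (iii) fall under item (1) and are set aside, while the rest are compared to the descriptions (2)--(5).

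First I would enumerate the matroids in the class on a modest number of elements (say up to around $12$ or $13$), working with reduced matrix representations over $\mathbb{H}_5$ and quotienting by isomorphism after each round. This should produce the sporadic matroids $U_{2,6}$, $U_{4,6}$, $P_6$, $M_{9,9}$, $M_{9,9}^*$ listed in item (2), together with the smallest members of the three infinite families (3)--(5). Next I would write down explicit $\mathbb{H}_5$-representations of the wheel-glued matroids in items (3)--(5) and verify directly that each is 3-connected, strictly $\{U_{2,5},U_{3,5}\}$-fragile, and $\{X_8,Y_8,Y_8^*\}$-minor-free, so that these families do indeed sit inside $\mathcal{M}_5$.

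The inductive heart of the argument is to verify computationally that for each matroid $M$ in items (3)--(5) beyond a bounded size, every 3-connected single-element extension or coextension of $M$ lying in $\mathcal{M}_5$ and avoiding $X_8$, $Y_8$, $Y_8^*$ is again obtained by lengthening a fan on one of the prescribed triangles, hence stays inside the same family. Since wheel-gluing changes a matroid in a very constrained way (appending a triangle-triad fan segment), there are only finitely many local extension patterns to classify; once the patterns stabilize at a small size the conclusion propagates to all sizes by induction on $|E(M)|$, reducing an a priori infinite verification to a finite computation.

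The main obstacle I expect is establishing completeness of the enumeration in a verifiable way. On the computational side this requires careful handling of $\mathbb{H}_5$-equivalence of partial-field representations and of matroid isomorphism, which can be expensive; on the structural side, one must rule out that an extension of a member of one family could jump into another family or produce an exotic $\{X_8,Y_8,Y_8^*\}$-minor-free matroid outside the list, and this is precisely the sort of exhaustive but unilluminating case check that motivates the computer-verification format of this technical report.
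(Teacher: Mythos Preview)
Your overall shape—enumerate the small matroids, identify the sporadic examples and the seeds of the infinite families, then argue that beyond some size every extension stays within one of the families—matches the paper in spirit. But the paper's proof rests on one structural ingredient that your proposal does not supply, and without it the inductive step is not a proof.

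The gap is in the sentence ``once the patterns stabilize at a small size the conclusion propagates to all sizes by induction on $|E(M)|$.'' You assert that only finitely many local extension patterns occur, but you give no mechanism that would force this. In the paper the reduction to a finite check is not an observation about stabilization; it is Lemma~\ref{lem:fanlemma} (Theorem~6.10 of \cite{CCMZ}): for a fixed $3$\dash connected $N$ with a prescribed family $\mathcal{F}_N$ of pairwise disjoint fans, if some $3$\dash connected matroid in the class with $N$ as a minor is \emph{not} a fan-extension of $N$ relative to $\mathcal{F}_N$, then there is already such a counterexample with at most two more elements than $N$. This is a nontrivial structural theorem, and it is what turns ``check enough small cases'' into a rigorous finite verification. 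The paper enumerates the $3$\dash connected members of $\mathcal{M}_5$ only up to nine elements, then for each nine-element matroid $M_{9,i}$ applies Lemma~\ref{lem:fanlemma} (with the relevant $\mathcal{F}_N$) so that only matroids of size at most eleven need to be examined (Lemmas~A.3--A.9); Lemma~\ref{lem:wheelglue} then converts ``fan-extension of $M_{9,i}$'' into ``obtained by gluing wheels to $\core(M_{9,i})$,'' which is how cases~\eqref{it:H53}--\eqref{it:H55} arise.

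So to repair your argument you would either need to invoke that fan-extension theorem (and the accompanying core/wheel-gluing translation), or to prove from scratch that every $\mathbb{H}_5$\dash extension or coextension of an arbitrarily large wheel-glued matroid in the class is again a fan-lengthening move. The latter is essentially what \cite{CCMZ} establishes and is not a routine local computation; simply running the enumeration further and seeing no new behaviour does not prove it.
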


The matroids $X_8$, $Y_8$, $M_{9,9}$, and $M_{7,1}$ will be defined in Section \ref{sec:U25}. By considering which of these matroids are in $\mathcal{M}_2$, we immediately obtain

\begin{corollary}\label{thm:U2characterization}
	Let $M' \in \mathcal{M}_2$ be 3-connected. Then $M'$ is isomorphic to a matroid $M$ for which one of the following holds:
	\begin{enumerate}
		\item $M$ has one of $X_8, Y_8, Y_8^*$ as a minor;
		\item $M \in \{M_{9,9}, M_{9,9}^*\}$;
		\item $M$ or $M^*$ can be obtained from $U_{2,5}$ (with groundset $\{a,b,c,d,e\}$) by gluing wheels to $(a,c,b), (a,d,b), (a,e,b)$;
		\item $M$ or $M^*$ can be obtained from $U_{2,5}$ (with groundset $\{a,b,c,d,e\}$) by gluing wheels to $(a,b,c), (c,d,e)$;
		\item $M$ or $M^*$ can be obtained from $M_{7,1}$ by gluing a wheel to $(1,3,2)$.
	\end{enumerate}
\end{corollary}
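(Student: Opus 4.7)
The plan is to derive the corollary from Theorem~\ref{thm:H5characterization} essentially by inspection. First I would verify the containment $\mathcal{M}_2 \subseteq \mathcal{M}_5$: the strict $\{U_{2,5},U_{3,5}\}$-fragility condition is common to both definitions, so this reduces to the standard partial-field fact that every $\mathbb{U}_2$-representable matroid is $\mathbb{H}_5$-representable, via the appropriate partial-field homomorphism $\mathbb{U}_2 \to \mathbb{H}_5$ (see \cite{Sem97,PZ08conf}). Consequently any 3-connected $M' \in \mathcal{M}_2$ satisfies one of the five conditions listed in Theorem~\ref{thm:H5characterization}.

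Next I would walk through the five cases. Conditions (1), (3), (4), and (5) transfer verbatim, since they merely assert structural possibilities and impose no representability constraint beyond what $M'$ already carries. Hence the only work is to cut condition~(2) down from $\{U_{2,6}, U_{4,6}, P_6, M_{9,9}, M_{9,9}^*\}$ to the two-element list $\{M_{9,9}, M_{9,9}^*\}$ asserted in the corollary. This amounts to checking that none of $U_{2,6}$, $U_{4,6}$, $P_6$ is $\mathbb{U}_2$-representable. For the uniform cases this is cleanest: $\mathbb{U}_2$-representability implies $\mathrm{GF}(4)$-representability (a direct consequence of the definition in \cite{Sem97}), and the projective line $\mathrm{PG}(1,4)$ has only five points, so neither $U_{2,6}$ nor its dual $U_{4,6}$ is $\mathrm{GF}(4)$-representable. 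A similar direct inspection, using the known partial-field data for $P_6$, disposes of the remaining matroid.

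There is essentially no obstacle here beyond invoking the preceding theorem. The only ingredients used --- the homomorphism $\mathbb{U}_2 \to \mathbb{H}_5$ and the implication that $\mathbb{U}_2$-representable matroids are $\mathrm{GF}(4)$-representable --- are part of the partial-field literature already cited in the introduction, and no new structural argument or computer verification beyond what Theorem~\ref{thm:H5characterization} provides is required. This is precisely why the paper describes the deduction as \emph{immediate}.
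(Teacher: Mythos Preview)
Your argument is correct and follows the same route as the paper: use the homomorphism $\mathbb{U}_2 \to \mathbb{H}_5$ to embed $\mathcal{M}_2$ in $\mathcal{M}_5$, invoke Theorem~\ref{thm:H5characterization}, and then prune case~(2) by excluding $U_{2,6}$, $U_{4,6}$, and $P_6$ from $\mathcal{M}_2$. The only cosmetic difference is in the exclusion step: the paper observes that $P_6$ and $U_{4,6}$ are $\Delta$--$Y$ reducible to $U_{2,6}$, so non-representability of $U_{2,6}$ over $\mathbb{U}_2$ kills all three at once, whereas you appeal to $\mathrm{GF}(4)$-non-representability and leave $P_6$ to a separate inspection; either device works. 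The paper's sketch additionally records, for completeness, that the matroids surviving in the list are genuinely $\mathbb{U}_2$-representable (via $\Delta$--$Y$ reducibility to $U_{2,5}$ and the fact that gluing regular wheels preserves representability), but as you note this direction is not needed for the corollary as stated.
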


\subsection{This report}
The report is built up as follows. In Section \ref{sec:fans}, we describe definitions and results from \cite{CCMZ}. It contains a formal definition of what it means to glue wheels to triangles, and results that reduce the proofs of the theorems above to finite case checks. In Sections \ref{sec:N11} and \ref{sec:U25}, we prove the theorems above, as well as a few additional lemmas. The computational aspects of those proofs are relegated to the appendix. The computations were carried out in Version 6.5 of SageMath \cite{sage}, in particular making use of the \emph{matroids} component \cite{sage-matroid}.

\section{On fan-extensions and gluing wheels}\label{sec:fans}

The content of this section comes directly from \cite{CCMZ}. We start with some definitions regarding fans. Recall that a fan of the matroid $M$ is a sequence of
distinct elements, $(e_{1},\ldots, e_{m})$, such that $m\geq 3$ and
\[
\{e_{1},e_{2},e_{3}\},\{e_{2},e_{3},e_{4}\},\ldots,
\{e_{m-2},e_{m-1},e_{m}\}
\]
is an alternating sequence of triangles and triads.
If $\{e_{1},e_{2},e_{3}\}$ is a triangle, then the elements
with odd indices are \emph{spoke elements}, and the
elements with even indices are \emph{rim elements}.
These labels are reversed when $\{e_{1},e_{2},e_{3}\}$ is a triad.
We blur the distinction between a fan and the underlying
set of elements, so when we say that two fans are disjoint, we mean
that their underlying sets are disjoint.

% Covering family

% Fan-extension

\begin{definition}\label{def:fanlengthening}
  Let $M$ and $N$ be 3-connected matroids, and let $(e_1, \ldots, e_n)$ be a fan of $M$, with $n \geq 4$. We say $M$ was obtained from $N$ by a \emph{fan-lengthening move} if one of the following holds:
  \begin{itemize}
    \item $N = M\delete e_1$ (so $e_1$ is a spoke element);
    \item $N = M\delete e_n$ (so $e_n$ is a spoke element);
    \item $N = M\contract e_1$ (so $e_1$ is a rim element);
    \item $N = M\contract e_n$ (so $e_n$ is a rim element);
    \item $n \geq 5$ and $N = M \contract e_i \delete e_{i+1}$ for some $1 \leq i \leq n-1$ (so $e_i$ is a rim element and $e_{i+1}$ is a spoke element).
    \item $n \geq 5$ and $N = M \contract e_{i+1} \delete e_i$ for some $1 \leq i \leq n-1$ (so $e_i$ is a spoke element and $e_{i+1}$ is a rim element).
  \end{itemize}
\end{definition}
In each case, $\{e_1, \ldots, e_n\} \cap E(N)$ is a fan of $N$. We say a sequence $F = (f_1, \ldots, f_m)$ is \emph{consistent} with a sequence $G = (g_1, \ldots, g_n)$ if $F$ appears as a (not necessarily contiguous) subsequence of either $G$ or its reversal.

\begin{definition}\label{def:coveringfam}
  Let $N$ be a 3-connected matroid on at least 4 elements, and let $\mathcal{F}_N$ be a collection of pairwise disjoint fans in $N$. If $M$ has $N$ as a minor, and $\mathcal{F}$ is a collection of fans of $M$, then we say $\mathcal{F}$ is a \emph{covering family} of $M$ (relative to $N$ and $\mathcal{F}_N$) if
  \begin{itemize}
    \item The fans in $\mathcal{F}$ are pairwise disjoint;
    \item $|\mathcal{F}| = |\mathcal{F}_N|$;
    \item For every $F_N \in \mathcal{F}_N$, there is a fan $F \in \mathcal{F}$ such that $F_N$ is consistent with $F$;
    \item Every element in $E(M) - E(N)$ is contained in some $F \in \mathcal{F}$.
  \end{itemize}
\end{definition}

\begin{definition}\label{def:fanextension}
  Let $N$ be a 3-connected matroid, and $\mathcal{F}_N$ a collection of pairwise disjoint fans of $N$. We define $\mathcal{M}_N$ to be the smallest family of matroids satisfying:
  \begin{itemize}
    \item $N \in \mathcal{M}_N$;
    \item If $M \in \mathcal{M}_N$, $\mathcal{F}$ is a covering family of $M$, and $F \in \mathcal{F}$, then each matroid obtained from $M$ by a fan-lengthening move on $F$ is in $\mathcal{M}_N$.
  \end{itemize}
  We say that $\mathcal{M}_N$ is the set of \emph{fan-extensions} of $N$ (relative to $\mathcal{F}_N$).
\end{definition}
Note that each fan-extension has, by construction, a covering family. Note also that covering families can pick up elements from $E(N)$ that aren't contained in fans in $\mathcal{F}_N$, usually because these elements have a ``choice'' of fans to which they might belong.

The main result that Chun, Chun, Mayhew, and Van Zwam \cite{CCMZ} prove is the following:

\begin{lemma}[{\cite[Theorem 6.10]{CCMZ}}]\label{lem:fanlemma}
  Let $\mathcal{M}$ be a set of matroids that is closed under minors and isomorphism. Let $N \in \mathcal{M}$ be a 3-connected matroid such that $|E(N)| \geq 4$ and N is neither a wheel nor a whirl. Assume that any member of $\mathcal{M}$ with $N$ as a minor is 3-connected up to series and parallel sets. Let $\mathcal{F}_N$ be a family of pairwise disjoint fans of $N$. If there is a 3-connected matroid in $\mathcal{M}$ with $N$ as a minor that is not a fan-extension of $N$ relative to $\mathcal{F}_N$, then there exists such a matroid, $M$, satisfying $|E(M)| - |E(N)| \leq 2$.
\end{lemma}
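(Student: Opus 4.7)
My plan is to argue by minimum counterexample. Let $M$ be a 3-connected matroid in $\mathcal{M}$ with $N$ as a minor that is not a fan-extension of $N$ relative to $\mathcal{F}_N$, and choose $M$ so that $|E(M)| - |E(N)|$ is minimum among all such matroids. If this quantity is at most $2$, we are done, so suppose $|E(M)| - |E(N)| \geq 3$ and aim for a contradiction.

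First, apply Seymour's splitter theorem to the 3-connected matroid $M$ and its 3-connected proper minor $N$; since $N$ is neither a wheel nor a whirl, there exists $x \in E(M) - E(N)$ such that either $M' := M \delete x$ or $M' := M \contract x$ is 3-connected and still has $N$ as a minor. Because $\mathcal{M}$ is minor-closed, $M' \in \mathcal{M}$, and by the minimality of $M$ the matroid $M'$ is a fan-extension of $N$ relative to $\mathcal{F}_N$; fix a covering family $\mathcal{F}'$ of $M'$.

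The heart of the argument is then a case analysis of how $x$ sits relative to $\mathcal{F}'$. If $x$ participates in triangles and triads with elements of some fan $F \in \mathcal{F}'$ in one of the six patterns listed in Definition \ref{def:fanlengthening}, then the step $M' \to M$ is a fan-lengthening move, and replacing $F$ in $\mathcal{F}'$ by the corresponding lengthened fan yields a covering family of $M$, contradicting the assumption that $M$ is not a fan-extension. So $x$ must attach to the fans in $\mathcal{F}'$ in some non-lengthening manner. In that case I would localize the obstruction by reversing a fan-lengthening move elsewhere: since $|E(M')| - |E(N)| \geq 2$, some fan $F^{\circ} \in \mathcal{F}'$ contains an element of $E(M') - E(N)$ disjoint from the triangles and triads that witness the obstruction at $x$, and performing the reverse of a fan-lengthening move at such an element (followed by series/parallel simplification, which is permitted by the hypothesis on $\mathcal{M}$) produces a 3-connected minor $M''$ of $M$ in $\mathcal{M}$, still containing $N$ as a minor and still exhibiting the obstruction near $x$. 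Thus $M''$ is a smaller counterexample than $M$, contradicting minimality.

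The main obstacle is showing that the obstruction around $x$ genuinely survives the shortening of a distant fan. Concretely, one needs a sublemma asserting that if $\mathcal{F}'$ cannot be extended to a covering family of $M$ by any lengthening move involving $x$, then this failure is witnessed by triangles and triads on a bounded number of elements near $x$, and those triangles and triads persist after simplification of an unrelated fan. Because the fans in $\mathcal{F}'$ are pairwise disjoint and each fan-lengthening move touches only one end (or an internal rim/spoke pair) of a single fan, this localization is plausible, but proving it rigorously requires a careful catalogue of how fans interact with the ambient triangle and triad structure. This case analysis, together with ensuring that si/co-simplification after the shortening move keeps us inside $\mathcal{M}$ and yields a 3-connected minor, is where the bulk of the technical work would lie.
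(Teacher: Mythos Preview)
The paper does not contain a proof of this lemma; it is quoted verbatim as \cite[Theorem~6.10]{CCMZ} and used as a black box. So there is nothing in the present paper to compare your attempt against.

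As for the proposal itself, it is a reasonable outline of the minimum-counterexample strategy, but the crucial step is not carried out, and the sketch of that step is incoherent as written. In your Step~5 you want to produce a smaller counterexample $M''$ by shortening a fan ``far from $x$''. But the covering family $\mathcal{F}'$ lives in $M' = M\delete x$ (or $M/x$), not in $M$; once you put $x$ back, you have no guarantee that the sequence $F^\circ$ is still a fan of $M$, nor that the fan-shortening move you perform in $M'$ corresponds to any well-defined operation in $M$. Conversely, if you perform the shortening in $M'$, the resulting $M''$ is a minor of $M'$ and does not contain $x$ at all, so it cannot ``still exhibit the obstruction near $x$''. Either way the argument collapses at exactly the point you flag as the main obstacle.

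There is also a subtler point you gloss over. Being a fan-extension is an inductive notion (Definition~\ref{def:fanextension}): one must reach $M$ from $N$ by a chain of fan-lengthening moves, each applied to a fan in some covering family of the intermediate matroid. Your Step~4 is fine because the definition is exactly set up so that a single lengthening move on a covering family of a fan-extension yields a fan-extension. But in Step~5 you implicitly need the converse direction---that a fan-shortening move on a covering family of $M'$ lands you on another fan-extension whose covering family you can control---and the paper's Proposition~\ref{court} is precisely the tool that handles this, under an extra hypothesis on $\mathcal{F}_N$ that your statement does not assume. Without something like that proposition, even the ``shorten a distant fan'' manoeuvre in $M'$ alone is not obviously available.
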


Not all covering families can be obtained by fan-extensions, but the following proposition shows that this is the case for all covering families in our applications. A \emph{fan-shortening move} is the reverse of a fan-lengthening move.

\begin{proposition}[{\cite[Proposition 2.4]{CCMZ}}]
\label{court}
Assume there is no fan, $F$, in $N$ such that two distinct fans in $\mcal{F}_{N}$
(considered as unordered sets) are contained in $F$.
Let $M$ be a $3$\dash connected matroid with $N$ as a proper minor,
and assume that every minor of $M$ that has $N$ as a minor is $3$\dash connected
up to series and parallel sets.
Let \mcal{F} be an arbitrary covering family of $M$.
Then \mcal{F} admits a fan-shortening move.
\end{proposition}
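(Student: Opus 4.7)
Since $M$ contains $N$ as a proper minor, $E(M) \setminus E(N)$ is non-empty, and the covering property of $\mathcal{F}$ places some new element in a fan $F = (f_1, \ldots, f_n) \in \mathcal{F}$. Let $F_N \in \mathcal{F}_N$ be a fan of $N$ consistent with $F$. My first task would be to argue that this $F_N$ is \emph{unique}: if two distinct members of $\mathcal{F}_N$ were both consistent with $F$, they would both sit inside a common fan of $N$ (obtained by looking at the restriction of the ordering of $F$ to elements lying in $E(N)$), contradicting the non-nesting hypothesis.

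The plan is then to classify the positions of $F$ relative to the embedded copy of $F_N$ and to select a valid fan-shortening move based on that classification. The structural claim I would establish is that the positions of $F$ outside the embedded copy of $F_N$ split into a (possibly empty) prefix run, a (possibly empty) suffix run, and interior runs of even length. When $F$ arises by iterated fan-lengthening from $F_N$, this is immediate by induction on the number of moves: endpoint moves extend the prefix or suffix run, while each interior move inserts a single consecutive rim-spoke pair into some interior run, preserving its even length. For an arbitrary covering family I would deduce the claim by combining the alternating triangle-triad pattern of $F$ in $M$ with the corresponding pattern of $F_N$ in $N$, using the fragility-like hypothesis that every minor of $M$ containing $N$ is 3-connected up to series and parallel sets to rule out degenerate insertions.

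Given the structural claim, the desired shortening move is selected as follows. If the prefix or suffix run of new elements is non-empty, the corresponding endpoint of $F$ is a new element, and I perform the endpoint-removal move (delete if the endpoint is a spoke, contract if it is a rim). Otherwise both endpoints lie in $F_N$, so some interior run of new elements is non-empty and has length at least $2$; I then choose a consecutive pair $(f_i, f_{i+1})$ in this run and perform the corresponding $\contract\!/\!\delete$ move. Either way, the resulting matroid $M'$ still has $N$ as a minor, since no element of $E(N)$ is removed. By hypothesis $M'$ is 3-connected up to series and parallel sets, and inspecting the triangles and triads of $F$ adjacent to the removed positions together with the 3-connectedness of $M$, I would verify no new series or parallel pair is created, upgrading the conclusion to full 3-connectedness of $M'$; the shortened $F$ and the unchanged remaining fans of $\mathcal{F}$ still form a covering family of $M'$.

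The main obstacle I anticipate is the structural claim for arbitrary covering families, specifically the parity constraint on interior runs of new elements. The non-nesting hypothesis rules out the most pathological configurations, but one still needs to trace the alternating triangle-triad structure carefully through the positions of $F$ and invoke the 3-connected-up-to-series-and-parallel-sets hypothesis to prevent interior runs of odd length. The remaining steps — the case split for the shortening and the series/parallel verification — I expect to be routine given the standard fan machinery in 3-connected matroids.
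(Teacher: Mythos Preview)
The paper does not prove this proposition; it is quoted verbatim from \cite[Proposition~2.4]{CCMZ} and used as a black box. There is therefore no ``paper's own proof'' to compare against.

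On the merits of your sketch, there are two genuine gaps.

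First, your uniqueness argument for $F_N$ does not go through as stated. You propose to restrict the ordering of $F$ to $E(N)$ and claim this yields a fan of $N$ containing both candidate $F_N$'s, contradicting the non-nesting hypothesis. But a fan of $M$ restricted to $E(N)$ need not be a fan of $N$: the triangles and triads witnessing $F$ live in $M$, and after deleting and contracting the elements of $E(M)\setminus E(N)$ the consecutive triples of the restricted sequence are not automatically triangles or triads of $N$. You need a separate argument here, and it is not routine.

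Second, and more seriously, the parity claim on interior runs is exactly the heart of the matter, and you have not proved it. Your inductive argument only handles covering families built up by fan-lengthening moves, whereas the proposition concerns an \emph{arbitrary} covering family---one satisfying only the four combinatorial conditions of Definition~\ref{def:coveringfam}, with no assumption that it arose from a sequence of moves. The consistency condition merely says $F_N$ is a subsequence of $F$; nothing in the definition forces the gaps between consecutive elements of $F_N$ inside $F$ to have even length. Establishing this requires a real argument relating the rim/spoke parities of $F_N$ in $N$ to those of the corresponding positions in $F$ in $M$, and this is where the hypothesis on $3$-connectivity of intermediate minors must do substantial work. Your sketch gestures at this but does not carry it out, and you yourself flag it as the main obstacle. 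Without it, the case split that follows (endpoint removal versus interior rim--spoke pair removal) cannot be justified, and in particular you cannot guarantee that an interior run, if that is all that is available, has length at least two.
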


% Wheel-gluing
Now that we have reduced checking if a class of matroids contains only fan-extensions to a finite case check, it is time to relate fan-extensions to the more structural process of gluing wheels to triangles. We denote the generalized parallel connection of $M_1$ and $M_2$ along flat $T$ (which is a modular flat of $M_2$) by $M_1 \boxtimes_T M_2$. If $M_{1}\boxtimes_{T_{2}}M_{2}$
and $M_{1}\boxtimes_{T_{3}}M_{3}$ are both defined, then
it follows easily from the definition that
$(M_{1}\boxtimes_{T_{2}}M_{2})\boxtimes_{T_{3}}M_{3}$ and
$(M_{1}\boxtimes_{T_{3}}M_{3})\boxtimes_{T_{2}}M_{2}$ are defined and equal.
Note that a triangle in a simple binary matroid is a modular flat.

\begin{definition}\label{def:wheelglue}
  Let $N$ be a matroid, $t$ an integer, and $T_i = (a_i,b_i,c_i)$ triangles of $N$, for $i = 1, \ldots, t$. We say $M$ is obtained from $N$ by \emph{gluing wheels to $T_1, \ldots, T_t$} if $M$ can be obtained in the following way. For each $i$, let $n_i \geq 2$ be an integer, and $W_i$ be a wheel of rank $n_i$ with one triangle labeled $(a_i, b_i, c_i)$, where $a_i$ and $c_i$ are spoke elements. Define $N_0 := N$ and, for $i = 1, \ldots, t$, $N_i := N_{i-1} \boxtimes_{T_i} W_i$. Let $X \subseteq T_1 \cup \cdots \cup T_t$ be such that $b_i \in X$ for all $i$ unless $b_i = a_j$ or $b_i = c_j$ for some $j$. Other elements may or may not be in $X$. Now $M = N_t\delete X$.
\end{definition}

% It will be convenient to introduce a slightly more compact notation:
% 
% \begin{definition}\label{def:blueprint}
% 	A \emph{blueprint} is a 4-tuple $(N, \mathcal{T}, n, X)$, where $N$ is a matroid, $\mathcal{T}$ a collection of $k$ ordered 3-tuples that are triangles of $N$ (not necessarily disjoint or distinct), $n$ is a vector $n = (n_1, \ldots, n_k)$, and $X$ is a set of elements, containing the middle element of each triangle in $\mathcal{T}$.
% 
% 	The matroid $M(N,\mathcal{T},n,X)$ is the matroid obtained as in Definition \ref{def:wheelglue}.
% \end{definition}

Finally, we need to find the matroid $N$ from the previous definition. This matroid usually has lower rank than the matroid to which Lemma \ref{lem:fanlemma} is applied. The following (lengthy) definition tells us how to construct this matroid, which we call the \emph{core} of $N$:

\begin{definition}\label{def:core}
	Let $N$ be a matroid, represented over a field $\mathbb{F}$, and $\mathcal{F} = \{F_1, \ldots, F_t\}$ a family of pairwise disjoint fans in $N$. Consider $N$ as a restriction of some projective geometry $P$. Set $N_0 := N$.
	
	For $i = 1, \ldots, t$, let $F_i = (e_1, \ldots, e_m) \in \mathcal{F}$.	We obtain $N_i$ from $N_{i-1}$ as follows. If $e_{1}$ is a spoke element, let $a_i' := e_1$. If $e_{1}$ is a rim element, then let $a_i'$ be the point of $P$ that is in the closure of $E(N)-F_{i}$ and of $\{e_{1},e_{2}\}$.
	Similarly, if $e_m$ is a spoke element, then let $c_i' := e_m$, 
	and otherwise let $c_{i}'$ be the point of $P$ that lies 
	in the closure of $E(N)-F_{i}$ and $\{e_{m-1},e_{m}\}$.
	Finally, let $R$ be the set of rim elements in $F_{i}$, and let
	$b_{i}'$ be the point in the closure of $R$ and $E(N)-F_{i}$. Now add points $a_i, b_i, c_i$ to the matroid $P$ such that $\{a_i,a_i'\}, \{b_i,b_i'\}, \{c_i,c_i'\}$ are parallel pairs, and let $N_i$ be the matroid obtained from $N_{i-1}$ by adding $\{a_i,b_i,c_i\}$.

	Let $L_{i}=\{a_{i},b_{i},c_{i}\}$, and let $L$ be
	$\bigcup_{i=1}^t L_{i}$. Let $S$ be the elements of $E(N)$ that are in parallel with some $a_i$ or $c_i$.
	We define
	\[
	  \core(N) := N_t \delete F_1 \delete \cdots \delete F_t \delete S.
	\]	
	 We call $\core(N)$ the \emph{core of $N$ relative to $\mathcal{F}$}.
\end{definition}

Some remarks are in place. Note that $L_i = \{a_i, b_i, c_i\}$ is a triangle for each $i$. 
Note also that we have defined the core relative to a representation of $N$.
In fact, any two representations of $N$ will lead to the same matroid $\core(N)$,
but we will not make use of this fact.

\begin{lemma}[{\cite[Lemma 3.5]{CCMZ}}]\label{lem:wheelglue}
	Let $N$ be a $3$\dash connected representable matroid,
	where $|E(N)|\geq 4$.
	Let $\mcal{F}_{N}$ be a pairwise disjoint family of fans in $N$.
	Assume there is no fan, $F$, in $N$ such that two distinct fans in $\mcal{F}_{N}$
	(considered as unordered sets) are contained in $F$.
	Assume that $M$ is a $3$\dash connected matroid with $N$ as a minor, and
	every minor of $M$ with $N$ as a minor is $3$\dash connected up to series and
	parallel sets.
	Let \mcal{F} be a covering family in $M$ (relative to $\mcal{F}_{N})$, so that
	$M$ is a fan-extension of $N$.
	There exists a pairwise disjoint family of fans, $\mcal{F}^{+}$, in $N$, such that
	$\mcal{F}^{+}$ is a covering family of $\mcal{F}_{N}$, and moreover,
	we can relabel $M$ in such a way that it is obtained by gluing wheels to $\core(N)$
	(where $\core(N)$ is defined relative to $\mcal{F}^{+}$), and
	\mcal{F} is enclosed in the family of canonical fans associated with the gluing operation.
\end{lemma}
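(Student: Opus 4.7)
The plan is to proceed by induction on $k := |E(M)| - |E(N)|$, using Proposition \ref{court} to access smaller fan-extensions at each step. At every stage I maintain a pairwise disjoint family $\mathcal{F}^+$ of fans in $N$ together with a choice of wheels $W_i$ such that gluing these wheels to $\core(N)$ (defined relative to $\mathcal{F}^+$) produces the matroid under consideration, and such that the covering family currently in play is enclosed in the canonical fans of this gluing.

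For the base case, $k = 0$ and $M = N$. I would obtain $\mathcal{F}^+$ by extending each $F\in\mathcal{F}_N$ to a maximal fan of $N$ containing it; the hypothesis that no fan of $N$ contains two distinct members of $\mathcal{F}_N$ ensures these maximal extensions are well-defined and pairwise disjoint. By Definition \ref{def:core}, replacing each $F_i^+\in\mathcal{F}^+$ by its endpoint-triangle $L_i=\{a_i,b_i,c_i\}$ produces $\core(N)$; then running Definition \ref{def:wheelglue} with each $W_i$ of rank matching the length of $F_i^+$, and with $X$ chosen according to whether the endpoints of $F_i^+$ are spoke or rim elements, recovers $N$. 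The original $\mathcal{F}_N$ then sits inside the canonical fans by construction.

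For the inductive step, Proposition \ref{court} supplies a fan-shortening move on $\mathcal{F}$ producing a $3$-connected minor $M'$ with covering family $\mathcal{F}'$. The inductive hypothesis gives $\mathcal{F}^+$ in $N$ and wheels $W_i'$ such that $M'$ is obtained by gluing the $W_i'$ to $\core(N)$, with $\mathcal{F}'$ enclosed in the canonical fans. The reversed move affects exactly one fan $F_j\in\mathcal{F}$, and by associativity of the generalized parallel connection (noted in the paragraph before Definition \ref{def:wheelglue}) I may rearrange the gluings so that $W_j'$ is attached last. Reversing the fan-shortening then amounts to enlarging $W_j'$ by a single rim-spoke pair (or, in the terminal cases, by adding one end-element) to obtain a larger wheel $W_j$; gluing the same $\mathcal{F}^+$ with $W_j$ in place of $W_j'$ yields $M$, and the enlarged $F_j$ is enclosed in the canonical fan of $W_j$.

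The main obstacle is the case analysis matching the six types of fan-lengthening moves in Definition \ref{def:fanlengthening} against the combinatorics of wheels and of the deletion set $X$ in Definition \ref{def:wheelglue}. An internal move $M\contract e_i\delete e_{i+1}$ (or its reverse) corresponds to splicing a new rim-spoke pair into an existing wheel, while a terminal move corresponds to attaching at a spoke/rim end of the wheel and may or may not contribute an element to $X$. A secondary subtlety is that a covering fan in $M$ may absorb elements of $E(N)$ that were not in any member of $\mathcal{F}_N$; these must be accounted for by the maximal extension to $\mathcal{F}^+$ and by the parallel pairs added in Definition \ref{def:core}. Verifying in every case that the triangle used for the generalized parallel connection remains modular in the preceding matroid, and that the resulting canonical fans enclose $\mathcal{F}$, is where the bulk of the technical work lies.
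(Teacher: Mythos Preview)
The paper does not contain a proof of Lemma~\ref{lem:wheelglue}. The result is imported verbatim from \cite{CCMZ} (it is their Lemma~3.5), and Section~\ref{sec:fans} explicitly says its content comes directly from that reference. So there is nothing in this paper to compare your attempt against.

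That said, your inductive scheme---using Proposition~\ref{court} to peel off a fan-shortening move, applying the inductive hypothesis to the smaller matroid, and then enlarging the appropriate wheel to reverse the move---is the natural shape for such an argument and is broadly in the spirit of how \cite{CCMZ} develops these results. Two points deserve care if you flesh this out. First, in the base case you propose extending each member of $\mathcal{F}_N$ to a maximal fan; the hypothesis only forbids a single fan of $N$ from containing two members of $\mathcal{F}_N$, which does not by itself guarantee that the maximal extensions are pairwise disjoint, so you need a more careful construction of $\mathcal{F}^+$. Second, in the inductive step you must check that after the fan-shortening the resulting family $\mathcal{F}'$ is still a covering family relative to $\mathcal{F}_N$ (in particular, that each member of $\mathcal{F}_N$ remains consistent with the shortened fan), and that the relabeling obtained inductively for $M'$ is compatible with the relabeling you need for $M$. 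These are exactly the ``technical work'' items you flag at the end, but they are where the actual content of the proof lives; as written, the proposal is a plausible outline rather than a proof.
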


\section{Fano-fragile matroids}\label{sec:N11}

\begin{figure}[htbp]
	\centering
		\includegraphics[scale=0.8]{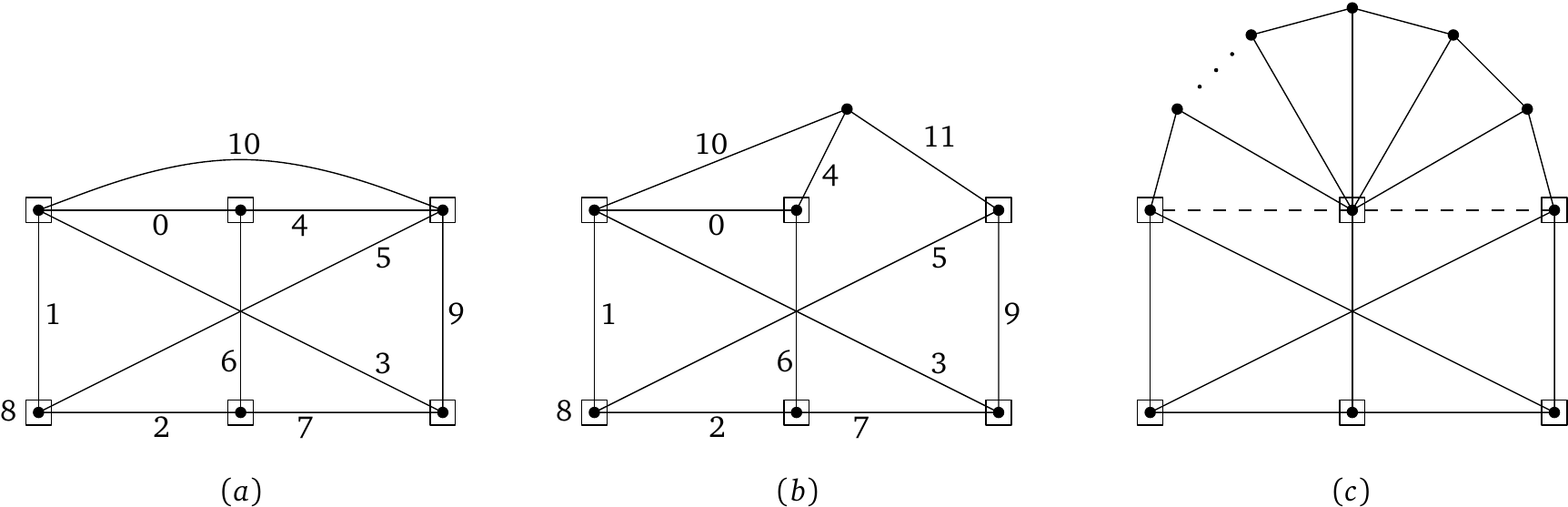}
	\caption{Graft representations of (a) $N_{11}$; (b)  $N_{11}^+$; (c) a matroid obtained from $N_{11}$ by gluing a wheel to $(0,10,4)$.}
	\label{fig:N11graft}
\end{figure}

In this section we will prove Theorems \ref{thm:N11} and \ref{thm:N12}. Consider the matroid $N_{11}$. A definition can be found on Page 2 of the Supporting Computations appendix. We can interpret $N_{11}$ as a graft (see \cite[p. 386]{ox2}; we use squares to indicate graft vertices), as shown in Figure \ref{fig:N11graft}(a). As verified in the appendix, $N_{11}$ has (up to isomorphism) a unique 3-connected, Fano-fragile coextension, which we denote by $N_{11}^+$, and no 3-connected, Fano-fragile extensions. See Figure \ref{fig:N11graft}(b). We repeat Theorem \ref{thm:N11}.

\begin{theorem}\label{thm:N11repeat}
	Let $M$ be a 3-connected, Fano-fragile matroid having a minor isomorphic to $N_{11}$. Then $M$ is isomorphic to a matroid obtained from $N_{11}$ by gluing a wheel to the triangle $(0,10,4)$.
\end{theorem}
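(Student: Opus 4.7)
The plan is to apply Lemma~\ref{lem:fanlemma} with $N = N_{11}$ and the singleton family $\mathcal{F}_N = \{(0,10,4)\}$, whose one member is the length-three fan given by the triangle $(0,10,4)$ (with $0$ and $4$ the spoke elements and $10$ the rim element). Lemma~\ref{lem:wheelglue} will then repackage the fan-extension conclusion as a wheel-gluing statement.

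First I would check the hypotheses of Lemma~\ref{lem:fanlemma}. The matroid $N_{11}$ is 3-connected with $11 \geq 4$ elements and is neither a wheel nor a whirl, since it has $R_{10}$ as a minor. The input class $\mathcal{M}$ should be taken to be the class of binary matroids that are Fano-fragile (or a suitably minor-closed enlargement thereof), which is isomorphism-closed; the fact that every member of $\mathcal{M}$ with $N_{11}$ as a minor is 3-connected up to series and parallel sets is recorded in \cite{CMWZ10}. Because $\mathcal{F}_N$ consists of a single fan, there is trivially no fan of $N_{11}$ containing two distinct members of $\mathcal{F}_N$ as unordered sets, so Proposition~\ref{court} and the corresponding hypothesis of Lemma~\ref{lem:wheelglue} are also met.

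Applying Lemma~\ref{lem:fanlemma}, the theorem reduces to checking that every 3-connected Fano-fragile matroid $M$ with an $N_{11}$-minor and $|E(M)| - 11 \leq 2$ is a fan-extension of $N_{11}$ relative to $\mathcal{F}_N$. The appendix already records that $N_{11}$ has no 3-connected Fano-fragile single-element extension and a unique such coextension, $N_{11}^+$; to finish the finite case analysis I would enumerate in the same manner all 3-connected Fano-fragile matroids obtained from $N_{11}$ by two successive single-element extension or coextension steps, and for each one exhibit a covering family consisting of a single fan that extends $(0,10,4)$, together with a sequence of fan-lengthening moves realising it as a member of $\mathcal{M}_{N_{11}}$.

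Finally, Lemma~\ref{lem:wheelglue} says that, after relabeling, every such fan-extension of $N_{11}$ is obtained by gluing wheels to $\core(N_{11})$ with respect to a suitable disjoint family $\mathcal{F}^+$ of fans in $N_{11}$. Unwinding Definition~\ref{def:core} for the single length-three fan $(0,10,4)$, the points $a_1', b_1', c_1'$ coincide with $0, 10, 4$ respectively, so the construction merely adds parallel copies of these three elements and deletes the originals; thus $\core(N_{11}) \cong N_{11}$ with the triangle $(0,10,4)$ preserved, and $M$ is precisely a matroid obtained from $N_{11}$ by gluing a single wheel to $(0,10,4)$. I expect the main obstacle to be the finite but nontrivial computational verification of the two-element case, amounting to enumerating the 3-connected Fano-fragile extensions and coextensions of $N_{11}^+$ and checking that each is a fan-extension relative to $\mathcal{F}_N$.
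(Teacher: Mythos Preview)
Your approach is correct and follows the same template as the paper---reduce to a finite check via Lemma~\ref{lem:fanlemma}, then convert to wheel-gluing via Lemma~\ref{lem:wheelglue}---but with a slightly different choice of base matroid. You take $N=N_{11}$ with the triangle $(0,10,4)$ as the sole fan, whereas the paper first uses the Splitter Theorem to pass to $N=N_{11}^{+}$ with the length-four fan $(0,10,4,11)$, invokes Lemma~A.1 for the finite check at that level, and then observes that $\core(N_{11}^{+})\cong N_{11}$. Your variant makes the core computation a tautology and caps the finite enumeration at $13$ elements rather than $14$; the paper's variant trades this for having a fan of length at least four from the start. The residual computation you anticipate---enumerating the $3$\dash connected Fano-fragile extensions and coextensions of $N_{11}^{+}$ and checking each is a fan-extension---is essentially the content of Lemma~A.1 in either version.
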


\begin{proof}
	This is clearly true for $N_{11}$ and $N_{11}^+$. By the Splitter Theorem, we may assume that $M$ has $N_{11}^+$ as a minor. It follows from Lemma A.1 that $M$ is a fan-extension of $N_{11}^+$ with respect to the fan $(0,10,4,11)$. It is clear that, up to relabeling elements, $\core(N_{11}^+)$ equals $N_{11}$. The desired result now follows from Lemma \ref{lem:wheelglue} with $N = N_{11}^+$ and $\mathcal{F}_N = \mathcal{F}^+ = \mathcal{F}$.
\end{proof}

The following is now obvious:

\begin{corollary}\label{cor:N11graft}
	If $M$ is a 3-connected, Fano-fragile matroid having a minor isomorphic to $N_{11}$, then $M$ has a graft representation as in Figure \ref{fig:N11graft}(c), where each of the dashed edges may or may not be present.
\end{corollary}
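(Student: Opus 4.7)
The plan is to combine Theorem \ref{thm:N11repeat} with a direct translation of the wheel-gluing operation into the language of grafts. By Theorem \ref{thm:N11repeat}, any such $M$ is isomorphic to $(N_{11} \boxtimes_T W) \delete X$, where $W$ is a binary wheel of some rank $n \geq 2$ with distinguished triangle $T = (0,10,4)$, and $X \subseteq \{0,10,4\}$ with $10 \in X$ (as required by Definition \ref{def:wheelglue}, since $t=1$ here). Because $N_{11}$ is binary (it is Fano-fragile), $W$ is binary by construction, and $T$ is a modular flat in $N_{11}$ (triangles in simple binary matroids are modular, as noted just before Definition \ref{def:wheelglue}), the generalized parallel connection $N_{11} \boxtimes_T W$ is well defined and binary, and hence $M$ admits a graft representation.

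To exhibit that representation explicitly I would start from the graft for $N_{11}$ in Figure \ref{fig:N11graft}(a) and attach the standard graphic representation of the rank-$n$ wheel $W$ along $T$. Concretely, $W$ has a cycle-matroid representation as a polygon of $n$ rim edges with $n$ spokes meeting at a central vertex, and its triangle $T$ corresponds to two adjacent spokes together with the rim edge between them; identifying this triangle with the $T$-triangle in the $N_{11}$-graft yields a combined graph whose graft matroid (with the same graft-vertex set as $N_{11}$, since $W$ contributes none) is exactly $N_{11} \boxtimes_T W$. The forced deletion of $10$ and the optional deletions of $0$ and $4$ allowed by Definition \ref{def:wheelglue} then correspond precisely to the solid and dashed edges in Figure \ref{fig:N11graft}(c).

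The only non-trivial step is the verification that gluing two graphic (or graphic-plus-graft) structures along a common triangle at the level of graphs really does realise the generalized parallel connection of the underlying binary matroids. This is a routine and well-known fact about the interaction of the generalized parallel connection with graft representations, and once one accepts it, the corollary follows immediately from Theorem \ref{thm:N11repeat}, which is why the authors describe the result as obvious.
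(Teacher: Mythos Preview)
Your proposal is correct and matches the paper's approach: the paper offers no explicit proof, simply stating that the corollary is ``obvious'' from Theorem~\ref{thm:N11repeat}, and you have spelled out exactly the intended reasoning---reduce via Theorem~\ref{thm:N11repeat} to a wheel glued along $(0,10,4)$, then read off the graft picture. One small quibble: in the paper's convention for $M_{1}\boxtimes_{T}M_{2}$ the modularity requirement is on $M_{2}$ (here the wheel $W$), not on $N_{11}$; since both are simple binary and $T$ is a triangle, this makes no difference to the argument.
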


\begin{figure}[htbp]
	\centering
		\includegraphics[scale=0.9]{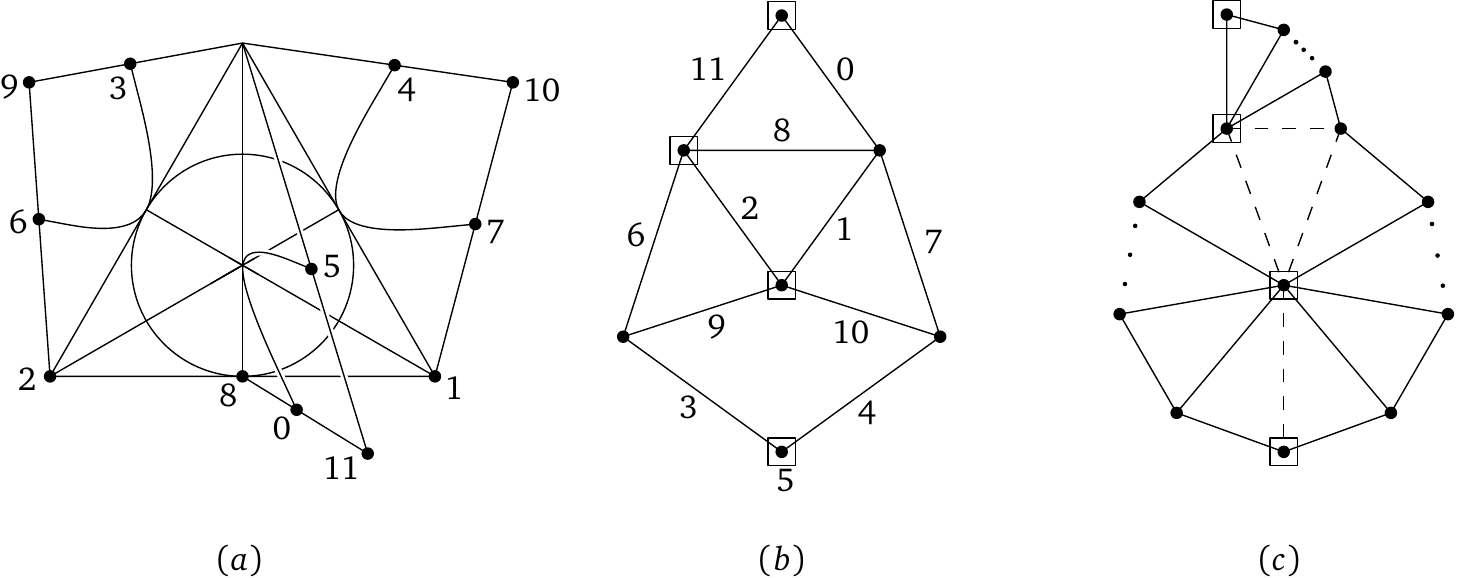}
	\caption{(a) Geometric representation of $N_{12}$; (b) graft representation of $N_{12}$; (c) graft representation of $M$ obtained from $F_7$ by gluing a wheel to $(2,6,9), (8,0,9), (1,7,9)$.}
	\label{fig:N12graft}
\end{figure}

In Figure \ref{fig:N12graft} we show a geometric and a graft representation of the matroid $N_{12}$. We leave it to the reader to confirm that these pictures are consistent with the matrix representation in the appendix. We assume the elements of $F_7$ are labeled as they would be in $N_{12} \contract \{3,4,5\} \delete \{10,11\}$. We repeat Theorem \ref{thm:N12}.

\begin{theorem}\label{thm:N12repeat}
	Let $M$ be a 3-connected, Fano-fragile matroid having a minor isomorphic to $N_{12}$. Then $M$ is isomorphic to a matroid obtained from $F_{7}$ by gluing wheels to the triangles $(2,6,9), (8,0,9), (1,7,9)$.
\end{theorem}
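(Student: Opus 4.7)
The plan is to follow the same template as the proof of Theorem~\ref{thm:N11repeat}, now with three fans instead of one. First, I would identify a canonical family $\mathcal{F}_{N_{12}}$ of three pairwise disjoint fans in $N_{12}$, one for each of the triangles $(2,6,9)$, $(8,0,9)$, $(1,7,9)$ of the underlying $F_7$; each such fan captures the residual structure of the corresponding rank-$3$ wheel after gluing to $F_7$ and deleting the elements of $\{0,6,7,9\}$. Since these three original triangles meet only in elements that are subsequently deleted, the resulting fans in $N_{12}$ are in fact disjoint, and no fan of $N_{12}$ contains two of them as subsets, so the disjointness hypotheses of Proposition~\ref{court} and Lemma~\ref{lem:wheelglue} hold.

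Next I would apply Lemma~\ref{lem:fanlemma} with $N = N_{12}$ and $\mathcal{F}_N = \mathcal{F}_{N_{12}}$. For this I need the standing hypothesis that every Fano-fragile matroid with $N_{12}$ as a minor is $3$-connected up to series and parallel sets, which is established in \cite{CMWZ10}. The lemma then reduces the theorem to a finite case check: every $3$-connected Fano-fragile matroid $M$ having $N_{12}$ as a minor with $|E(M)| - |E(N_{12})| \leq 2$ is a fan-extension of $N_{12}$ relative to $\mathcal{F}_{N_{12}}$. This is exactly the sort of enumeration performed by Lemma~A.1 in the appendix for the $N_{11}$ case, and I would rely on an analogous appendix lemma that produces all $3$-connected Fano-fragile single- and double-element extensions and coextensions of $N_{12}$ and verifies directly that each is a fan-extension along one of the three designated fans.

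Once fan-extension status is known for all such $M$, Proposition~\ref{court} guarantees that any covering family admits a fan-shortening move, so we may invoke Lemma~\ref{lem:wheelglue} to relabel $M$ as being obtained by gluing wheels to $\core(N_{12})$. The final ingredient is to confirm, by a direct computation against the matrix representation of $N_{12}$ in the appendix and Definition~\ref{def:core}, that $\core(N_{12})$ relative to $\mathcal{F}_{N_{12}}$ is isomorphic to $F_7$, with the three triangles $L_i = \{a_i,b_i,c_i\}$ produced by the core construction labeled as $(2,6,9)$, $(8,0,9)$, $(1,7,9)$. Substituting this identification into the statement of Lemma~\ref{lem:wheelglue} gives the conclusion of the theorem.

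The main obstacle is the computational content of the appendix lemma: unlike the $N_{11}$ case, which has a single coextension $N_{11}^+$ and a unique fan, here the enumeration must handle three fans simultaneously and rule out spurious extensions or coextensions that are fragile and $3$-connected but fail to lie along any of the three fans. The supporting calculation that $\core(N_{12}) \cong F_7$ with the correct triangle labels is essentially bookkeeping once one tracks which projective points arise as the $a_i'$, $b_i'$, $c_i'$ of Definition~\ref{def:core}, but it has to be carried out carefully for all three fans in parallel.
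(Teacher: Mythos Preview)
Your overall strategy matches the paper's proof exactly: invoke the appendix computation (Lemma~A.2) to conclude that any such $M$ is a fan-extension of $N_{12}$ relative to $\mathcal{F}=\{(2,6,9,3),(8,0,11,5),(1,7,10,4)\}$, and then apply Lemma~\ref{lem:wheelglue} to rewrite $M$ as a wheel-gluing onto $\core(N_{12})$.

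However, your computation of $\core(N_{12})$ is wrong, and this matters for the last step. By Definition~\ref{def:core}, the elements $a_{i},b_{i},c_{i}$ are freshly created and pairwise distinct across $i$, so the three triangles $L_{1},L_{2},L_{3}$ in $\core(N_{12})$ are \emph{disjoint}; you cannot label them $(2,6,9),(8,0,9),(1,7,9)$ with a shared $9$. What actually happens is that one element from each $L_{i}$ lands in the projective point that used to be $9$, so $\core(N_{12})$ is $F_{7}$ with one element replaced by a parallel class of size~$3$. Thus Lemma~\ref{lem:wheelglue} only tells you that $M$ is obtained by gluing wheels to this non-simple matroid, not to $F_{7}$ itself. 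The paper closes this gap with the observation that, since $M$ is $3$-connected, at least two of the three parallel elements must lie in the deletion set $X$ of Definition~\ref{def:wheelglue}; after that deletion the base matroid simplifies to $F_{7}$ with the three triangles meeting in the single surviving copy of $9$, yielding the statement of the theorem. You should add this argument explicitly.
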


\begin{proof}
	It follows from Lemma A.2 that $M$ is a fan-extension of $N_{12}$ with respect to $\mathcal{F} = \{(2, 6, 9, 3), (8, 0, 11, 5), (1, 7, 10, 4)\}$. It is clear that, up to relabeling, $\core(N_{12})$ equals $F_7$ with one element replaced by a parallel class of size 3. The desired result now follows from Lemma \ref{lem:wheelglue} with $N = N_{12}$ and $\mathcal{F}_N = \mathcal{F}^+ = \mathcal{F}$, where we make use of the fact that, to get a 3-connected matroid, at least two elements from the parallel class must be among the deleted elements.
\end{proof}

The following is now obvious:

\begin{corollary}\label{cor:N12graft}
  Let $M$ be a 3-connected, Fano-fragile matroid having a minor isomorphic to $N_{12}$. Then $M$ has a graft representation as in Figure \ref{fig:N12graft}(c), where each of the dashed edges may or may not be present.  
\end{corollary}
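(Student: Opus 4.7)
The plan is to derive the corollary directly from Theorem \ref{thm:N12repeat} by translating the wheel-gluing construction into graft language. The matroid $F_7$ has the standard graft representation on $K_4$ with one distinguished graft vertex, implicit in Figure \ref{fig:N12graft}(b), and the three triangles $T_1 = (2,6,9)$, $T_2 = (8,0,9)$, $T_3 = (1,7,9)$ each appear as a substructure incident to that graft vertex.

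First, I would show that gluing a wheel along one of these triangles has a clean graph-theoretic interpretation on the graft. Since a triangle in a simple binary matroid is a modular flat and the generalized parallel connection of two binary matroids along such a flat is binary, each wheel-gluing corresponds to attaching a sequence of triangles in a fan pattern to the graft graph along the edge representing $T_i$; the fan-lengthening moves of Definition \ref{def:fanlengthening} then extend the attached fan one element at a time in the obvious way (rim elements become ordinary graph edges, spoke elements become graft edges, and contractions/deletions act on the graph accordingly). Carrying this out for all three triangles produces the three ``triangular ladder'' attachments of Figure \ref{fig:N12graft}(c).

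Next, I would verify the dashed-edge correspondence against Definition \ref{def:wheelglue}. In the labelling of Theorem \ref{thm:N12repeat} the rim elements $b_1 = 6$, $b_2 = 0$, $b_3 = 7$ do not coincide with any $a_j$ or $c_j$, so each must lie in the deletion set $X$; these correspond exactly to the edges that are absent (not merely dashed) in the figure. The shared spoke element $9 = c_1 = c_2 = c_3$ and the remaining spokes $2, 8, 1$ are optional members of $X$, matching the dashed edges in Figure \ref{fig:N12graft}(c).

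The only real obstacle is bookkeeping: tracking which elements of each $T_i$ are spokes versus rims in their attached wheels, handling the shared element $9$ consistently across the three gluings (it appears only once in the final graft, as the central graft vertex of Figure \ref{fig:N12graft}(b,c)), and respecting the parallel-class caveat from the proof of Theorem \ref{thm:N12repeat} that at least two of the three copies of $9$ must be deleted to preserve 3-connectivity. Once this is done the corollary is immediate from Theorem \ref{thm:N12repeat}.
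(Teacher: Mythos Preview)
Your proposal is correct and follows the same route as the paper: the paper states that the corollary is ``obvious'' and gives no proof, relying entirely on Theorem~\ref{thm:N12repeat} together with the graft picture in Figure~\ref{fig:N12graft}. You simply spell out the bookkeeping---identifying the $a_i,b_i,c_i$ in each triangle, noting that $b_1=6$, $b_2=0$, $b_3=7$ must be deleted while $9=c_1=c_2=c_3$ and $1,2,8$ are optional---which is exactly the translation the paper leaves to the reader.
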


\section{$\{U_{2,5}, U_{3,5}\}$-fragile matroids}\label{sec:U25}

In this section we will prove Theorem \ref{thm:H5characterization} and Corollary \ref{thm:U2characterization}.
Recall that we defined $\mathcal{M}_5$ to be the set of strictly $\{U_{2,5},U_{3,5}\}$-fragile matroids that are representable over the partial field $\mathbb{H}_5$, and that we defined $\mathcal{M}_2$ to be the set of strictly $\{U_{2,5},U_{3,5}\}$-fragile matroids that are representable over the partial field $\mathbb{U}_2$.

Our proof, which consists mostly of computer checking, will proceed as follows. First, we review some theory regarding the partial fields $\mathbb{H}_5$ and $\mathbb{U}_2$. In particular, we will use the relationship with the six-fold product ring of $\GF{5}$. %Unfortunately, matroids with up to 8 elements can belong to several of the last four classes. 
Then we will enumerate all 3-connected members of $\mathcal{M}_5$ with up to $9$ elements. After that, we apply the Splitter Theorem (in two instances) and Lemmas \ref{lem:fanlemma} and \ref{lem:wheelglue} (in four instances) to finish off our result.

\subsection{The Hydra-5 partial field}

In \cite{PZ08conf}, the Hydra-5 partial field is defined as
\begin{align*}
	\hydra_5 = (\Q(\alpha,\beta,\gamma),\langle & -1,\alpha,\beta,\gamma,\alpha-1,\beta-1,\gamma-1,\alpha-\gamma,
	   \gamma-\alpha\beta,\\
	& (1-\gamma)-(1-\alpha)\beta\rangle),
\end{align*}
where $\alpha$, $\beta$, $\gamma$ are indeterminates. The \emph{fundamental elements} (i.e. those elements $x \in \hydra_5$ such that $1-x \in \hydra_5$) are
\begin{align*}
 \assoc\Big\{&1,\alpha,\beta,\gamma,\tfrac{\alpha\beta}{\gamma},\tfrac{\alpha}{\gamma},\tfrac{(1-\alpha)\gamma}{\gamma-\alpha},\tfrac{(\alpha-1)\beta}{\gamma-1},\tfrac{\alpha-1}{\gamma-1},\tfrac{\gamma-\alpha}{\gamma-\alpha\beta},
	\tfrac{(\beta-1)(\gamma-1)}{\beta(\gamma-\alpha)}, \tfrac{\beta(\gamma-\alpha)}{\gamma-\alpha\beta},\tfrac{(\alpha-1)(\beta-1)}{\gamma-\alpha},\\
	&
	\tfrac{\beta(\gamma-\alpha)}{(1-\gamma)(\gamma-\alpha\beta)},\tfrac{(1-\alpha)(\gamma-\alpha\beta)}{\gamma-\alpha},\tfrac{1-\beta}{\gamma-\alpha\beta}
	\Big\},
\end{align*}
where $\assoc(X) = \bigcup_{x\in X} \{x, 1-x, 1/(1-x), x/(x-1), (x-1)/x, 1/x\}$. Our interest in this partial field is based on the following result (which is \cite[Lemma 5.17]{PZ08conf}):

\begin{theorem}\label{lem:hydra5}
	Let $M$ be a 3-connected matroid.
	\begin{enumerate}
		\item If $M$ has at least 5 inequivalent representations over $\GF{5}$, then $M$ is representable over $\hydra_5$;
		\item If $M$ has a $U_{2,5}$- or $U_{3,5}$-minor and $M$ is representable over $\hydra_5$, then $M$ has at least 6 inequivalent representations over $\GF{5}$.
	\end{enumerate}
\end{theorem}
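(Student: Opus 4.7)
The plan is to exploit the design of $\hydra_5$: there are exactly six partial-field homomorphisms $\phi_1,\dots,\phi_6 \colon \hydra_5 \to \GF{5}$, one for each way of sending the triple of indeterminates $(\alpha,\beta,\gamma)$ to the $\GF{5}$-triple that parametrises an inequivalent $\GF{5}$-representation of $U_{2,5}$. Given any $\hydra_5$-representation $A$ of $M$, applying each $\phi_i$ entrywise produces a $\GF{5}$-representation $\phi_i(A)$ of $M$. Both parts of the theorem are comparisons between the single $\hydra_5$-matrix $A$ and these six $\GF{5}$-matrices, carried out via cross-ratios.

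For part (ii), starting from a $\hydra_5$-representation $A$ of $M$, I would show that the six matrices $\phi_1(A),\dots,\phi_6(A)$ are pairwise inequivalent $\GF{5}$-representations of $M$. By Whittle's cross-ratio characterisation, equivalence of $\GF{5}$-representations of a 3-connected matroid is detected by the collection of cross-ratios realised on the ground set. Restricting $A$ to a $U_{2,5}$- or $U_{3,5}$-minor $N$ of $M$, the indeterminate triple $(\alpha,\beta,\gamma)$ appears among the cross-ratios of $N$, so $\phi_i(A)$ realises the triple $(\phi_i(\alpha),\phi_i(\beta),\phi_i(\gamma))$ on $N$. By the construction of $\hydra_5$ these six triples index the six inequivalent $\GF{5}$-representations of $U_{2,5}$, so the matrices $\phi_i(A)$ are pairwise inequivalent.

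For part (i), the plan runs in the opposite direction: assemble five inequivalent $\GF{5}$-representations of $M$ into a single $\hydra_5$-representation. Stabiliser-theoretic results already imply that a 3-connected $\GF{5}$-representable matroid with five or more inequivalent representations must contain a $U_{2,5}$- or $U_{3,5}$-minor, so the minor hypothesis of part (ii) is tacitly available. Fix a common reference basis for $M$ and put each of the five given representations into standard reduced form; the entries on which the five disagree lie at positions controlling the cross-ratios of the minor $N$, and they realise five of the six $\GF{5}$-triples $(\phi_i(\alpha),\phi_i(\beta),\phi_i(\gamma))$. From this data one builds a candidate matrix $A$ over $\Q(\alpha,\beta,\gamma)$ by placing $\alpha,\beta,\gamma$ and their appropriate rational-function combinations at those coordinates.

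The main obstacle is the verification that every non-zero subdeterminant of $A$ is a product of associates of the sixteen listed fundamental elements and hence lies in $\hydra_5$. This is not a formality: knowing that five $\GF{5}$-specialisations of a rational function represent the same matroid does not, a priori, force the function into the very restricted associate list above. The crux is to show that this is nevertheless forced, by a finite but intricate case analysis based on how the cross-ratio positions sit relative to $N$ and on closure of the associate classes under the pivot and scaling operations induced by matroid equivalence. Once this subdeterminant check is in place, the lift $A$ is genuinely a $\hydra_5$-representation of $M$, completing part (i); this check is the technical heart of the argument in \cite{PZ08conf}.
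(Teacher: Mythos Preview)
The paper does not prove this theorem at all: it is quoted as \cite[Lemma~5.17]{PZ08conf} and used as a black box, so there is no in-paper argument to compare against.

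Regarding your sketch itself: part~(ii) is essentially the right idea and matches how the original source argues --- the six partial-field homomorphisms $\hydra_5\to\GF{5}$ push an $\hydra_5$-representation forward to six $\GF{5}$-representations, and the $U_{2,5}$- or $U_{3,5}$-minor (together with the fact that $U_{2,5}$ is a $\GF{5}$-stabiliser) forces them to be pairwise inequivalent.

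Your plan for part~(i), however, does not reflect the actual machinery in \cite{PZ08conf} and has a real gap. The proof there does not hand-build a matrix over $\Q(\alpha,\beta,\gamma)$ by ``placing $\alpha,\beta,\gamma$ at coordinates'' and then checking subdeterminants by case analysis. Rather, the inequivalent $\GF{5}$-representations are first bundled into a single representation over the product ring $\GF{5}\times\cdots\times\GF{5}$, and a general \emph{Lift Theorem} is invoked to lift that product-ring matrix to $\hydra_5$; the finite verification is of the hypotheses of the Lift Theorem (a cross-ratio condition), not a direct subdeterminant check on a guessed matrix. Your sketch skips the construction problem entirely: given five $\GF{5}$-matrices, it is not at all clear how to produce a single $\Q(\alpha,\beta,\gamma)$-matrix specialising to all of them, and that is exactly what the Lift Theorem supplies. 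You also invoke, without justification, that five inequivalent $\GF{5}$-representations already force a $U_{2,5}$- or $U_{3,5}$-minor; in \cite{PZ08conf} this and the jump from ``at least 5'' to ``at least 6'' are handled via the full hydra hierarchy $\hydra_1,\ldots,\hydra_5$, not as a standalone stabiliser fact. Since you yourself defer the ``technical heart'' to \cite{PZ08conf}, what you have is a plausibility outline rather than a proof, and for part~(i) the outline does not match how the argument is actually executed.
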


% To see the second part of this result, consider the homomorphism $\phi: \hydra_5 \to \bigotimes_{k=1}^6 \GF{5}$ given by $\phi(\alpha) = (2,3,4,2,3,4)$, $\phi(\beta) = (3,2,3,4,2,4)$, $\phi(\gamma) = (3,2,3,4,4,2)$. Let
% \begin{align}
%   A := \begin{bmatrix}
%     1 & 1 & 1\\
%     1 & \alpha & \gamma
%   \end{bmatrix}.
% \end{align}
% Then $U_{2,5} \cong M[I\ \ A]$, and
% \begin{align}
%   \phi(A) = \begin{bmatrix}
%     (1,1,1,1,1,1) & (1,1,1,1,1,1) & (1,1,1,1,1,1)\\
%     (1,1,1,1,1,1) & (2,3,4,2,3,4) & (3,2,3,4,4,2)
%   \end{bmatrix}.
% \end{align}
% It is readily checked that the six projections onto a single coordinate yield pairwise inequivalent $\GF{5}$-representations. 

Rather than working with $\mathbb{H}_5$ directly, we will use the \emph{product ring} $\bigotimes_{i=1}^6 \GF{5}$ whose elements are 6-tuples of $\GF{5}$-elements, with componentwise addition and multiplication. We will use \cite[Lemma 5.8]{PZ08conf}:

\begin{lemma}\label{lem:58}
	Let $A$ be a matrix over $\bigotimes_{i=1}^k \GF{5}$, for $k \geq 3$, such that the projections onto a single coordinate are pairwise inequivalent. Let $p \in \bigotimes_{i=1}^k \GF{5}$ be such that $p \neq (0,0,\ldots, 0), (1,1,\ldots,1)$, and some entry of $p$ occurs in at least three positions. Then $p$ is not among the cross ratios of $A$.
\end{lemma}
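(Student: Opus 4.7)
The plan is to argue by contradiction. Suppose $p = (p_1, \ldots, p_k)$ is a cross ratio of $A$ satisfying the hypotheses. I would reduce the claim to a statement about how the six-element cross-ratio group acts on $\GF{5}\setminus\{0,1\}$ and what this forces on inequivalent $\GF{5}$-representations.

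First I would observe that cross ratios commute with ring homomorphisms: a cross ratio is a rational expression in matrix entries whose denominator is a unit, so applying the coordinate projection $\pi_i\colon \bigotimes_j \GF{5} \to \GF{5}$ carries a cross ratio of $A$ to the corresponding cross ratio of the projected matrix $A_i := \pi_i(A)$. Hence each $p_i \in \GF{5}$ is a cross ratio of $A_i$. Moreover, the pairwise inequivalent matrices $A_1, \ldots, A_k$ all represent a single matroid $M$, because vanishing/non-vanishing of submatrix determinants is a coordinatewise condition that is shared across projections of a well-defined matrix over the product ring.

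Next I would eliminate the trivial entries of $p$. The condition $p_i = 0$ corresponds to the vanishing of a prescribed $2\times 2$ minor of $A_i$, and $p_i = 1$ to a prescribed equality of two such minor products; both are matroid-theoretic conditions determined by the rank function of $M$. Since every $A_j$ represents the same $M$, if $p_i = 0$ for some $i$ then $p_j = 0$ for every $j$, forcing $p = (0,\ldots,0)$, and analogously if some $p_i = 1$. So under the hypothesis we may assume every entry of $p$ lies in $\{2,3,4\}$.

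The heart of the argument is the orbit calculation. The six-element cross-ratio group generated by $x \mapsto 1-x$ and $x \mapsto 1/x$ acts on $\GF{5}\setminus\{0,1\}$ with the single orbit $\{2,3,4\}$ of size $3$, so every stabilizer has order $2$. Interpreted representation-theoretically, this translates into the following statement that I would establish using the Semple--Whittle framework of cross ratios over partial fields: for the matroid $M$ and any fixed location at which a cross ratio takes a value in $\GF{5}\setminus\{0,1\}$, among any family of pairwise inequivalent $\GF{5}$-representations of $M$ at most two can assign the same value $x \in \{2,3,4\}$ to that location. If some value $x$ occurred at three positions $i_1, i_2, i_3$ of $p$, then $A_{i_1}, A_{i_2}, A_{i_3}$ would be three pairwise inequivalent representations of $M$ assigning $x$ to one location, violating this bound and giving the required contradiction.

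The main obstacle is the translation from the abstract orbit-stabilizer fact in $\GF{5}$ to the concrete ``at most two inequivalent representations share a value at a location'' bound. This requires unpacking how row/column rescalings and permutations act on the values of a cross ratio at a specified location, and matching that action with the $S_3$ cross-ratio symmetry; this is the bookkeeping done in \cite{PZ08conf}. Once this correspondence is in place, the rest of the proof is a short pigeonhole application.
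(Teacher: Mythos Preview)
The paper does not give its own proof of this lemma; it is quoted directly from \cite[Lemma~5.8]{PZ08conf}. So there is no in-paper argument to compare against, and your sketch is really an attempt to reconstruct the cited proof.

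Your preliminary steps are correct: cross ratios commute with the coordinate projections $\pi_i$, the projected matrices $A_1,\ldots,A_k$ all represent a common matroid $M$, and the values $0$ and $1$ for a cross ratio are matroid-invariant, so under the hypotheses every entry $p_i$ lies in $\{2,3,4\}$. The orbit--stabilizer count for the cross-ratio action on $\GF{5}\setminus\{0,1\}$ is also right.

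The gap is in your key translation: ``stabilizer of size $2$'' does not by itself yield ``at most two pairwise inequivalent $\GF{5}$-representations of $M$ can assign the same value at a fixed cross-ratio location.'' As stated this fails for disconnected matroids. Take $M=U_{2,5}\oplus U_{2,5}$, assemble $A$ over $\bigotimes_{i=1}^3\GF{5}$ so that the three projections differ only in the first summand and agree on the second; the projections are pairwise inequivalent, yet any cross ratio read from the second summand is a constant tuple $(x,x,x)$ with $x\in\{2,3,4\}$. So some connectivity hypothesis (present in the source, and certainly satisfied in the $3$-connected setting where the paper applies the lemma) is doing real work that your sketch omits. The actual mechanism is the stabilizer theorem: when $M$ is $3$-connected with a $U_{2,5}$- or $U_{3,5}$-minor, its inequivalent $\GF{5}$-representations are in bijection with those of the minor, and the $S_3$ cross-ratio action permutes them compatibly; then each value in $\{2,3,4\}$ is hit exactly twice at any fixed location. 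You gesture at this but ultimately defer the bookkeeping to \cite{PZ08conf}, which is exactly what the paper does, so your proposal ends up resting on the same citation rather than supplying an independent argument.
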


It follows that we only need to be concerned in our computations with extensions in $\bigotimes_{i=1}^6 \GF{5}$ with the set of fundamental elements restricted to those in which each of $2, 3, 4$ appears exactly twice. Note that, up to permuting the coordinates, $U_{2,5}$ has a \emph{unique} representation over $\bigotimes_{i=1}^6 \GF{5}$, and that $U_{2,5}$ is a \emph{stabilizer} for the class of $\GF{5}$-representable matroids. That is, if $M$ is a 3-connected, $\GF{5}$-representable matroid with a $U_{2,5}$-minor, then the representation of $U_{2,5}$ uniquely determines the representation of $M$.

Since there is a partial-field homomorphism $\phi:\mathbb{U}_2 \to \mathbb{H}_5$, it follows that $\mathcal{M}_2 \subseteq \mathcal{M}_5$. Hence most of our efforts below are focused on $\mathcal{M}_5$. 

For more details, we refer to \cite{PZ08conf} and \cite{PZ10report}.

\subsection{The small matroids}

Figures \ref{fig:H5_5}--\ref{fig:H5_9} have geometric representations of all 3-connected matroids in $\mathcal{M}_5$ that have at most 9 elements. In the case of dual pairs, we have usually drawn only one of the two. Note that $M_{9,3}$, $M_{9,4}$, $M_{9,6}$, and their duals all have a minor in $\{X_8, Y_8, Y_8^*\}$. In Figure \ref{fig:M71} we included a labeled version of $M_{7,1}$ and two of the matroids containing it as a minor, $M_{8,6}$ and $M_{9,7}$.

\begin{figure}[htbp]
	\centering
		\includegraphics[scale=1.2]{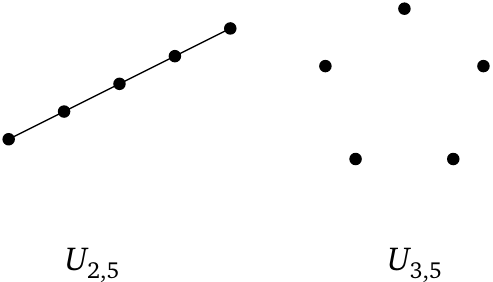}
	\caption{The 3-connected matroids in $\mathcal{M}_5$ on 5 elements.}
	\label{fig:H5_5}
\end{figure}

\begin{figure}[htbp]
	\centering
		\includegraphics[scale=1.2]{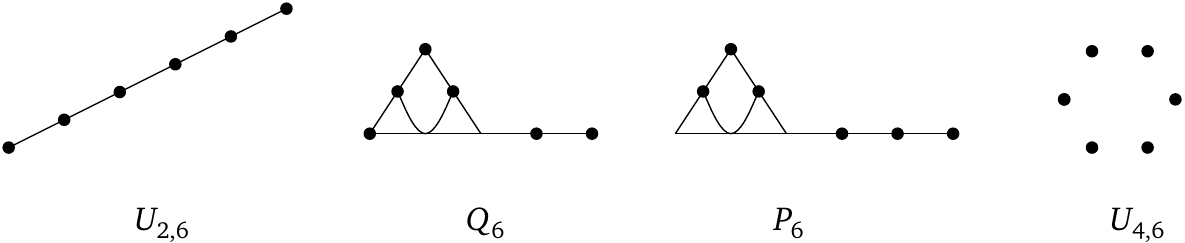}
	\caption{The 3-connected matroids in $\mathcal{M}_5$ on 6 elements.}
	\label{fig:H5_6}
\end{figure}

\begin{figure}[htbp]
	\centering
		\includegraphics[scale=1.2]{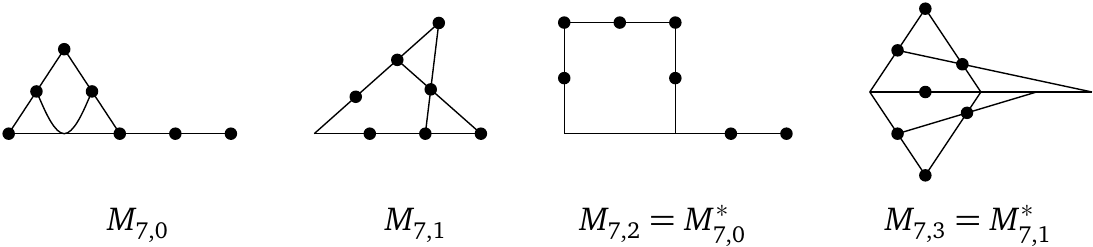}
	\caption{The 3-connected matroids in $\mathcal{M}_5$ on 7 elements.}
	\label{fig:H5_7}
\end{figure}

\begin{figure}[htbp]
	\centering
		\includegraphics[scale=1.2]{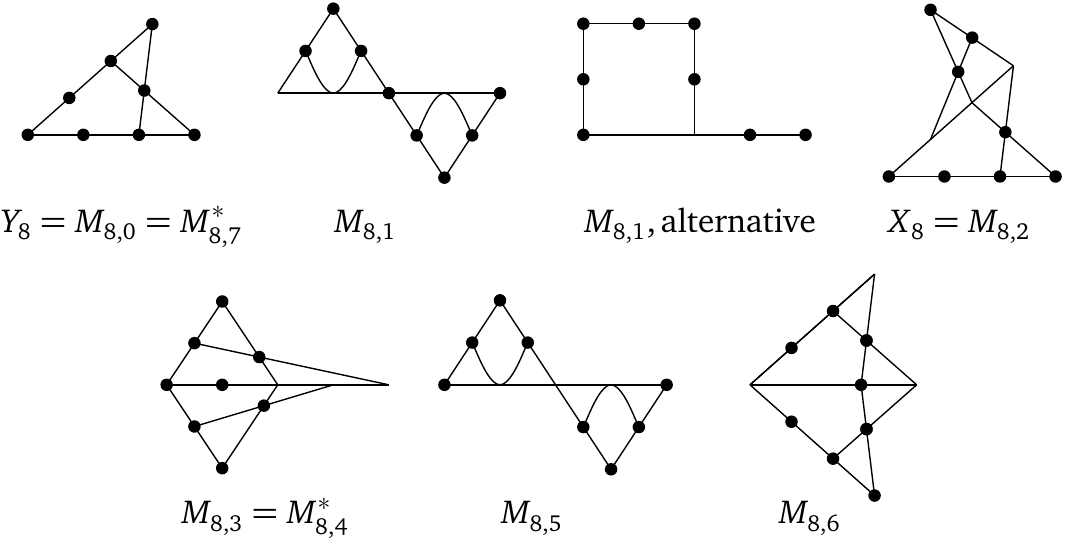}
	\caption{The 3-connected matroids in $\mathcal{M}_5$ on 8 elements.}
	\label{fig:H5_8}
\end{figure}

\begin{figure}[htbp]
	\centering
		\includegraphics[scale=1.2]{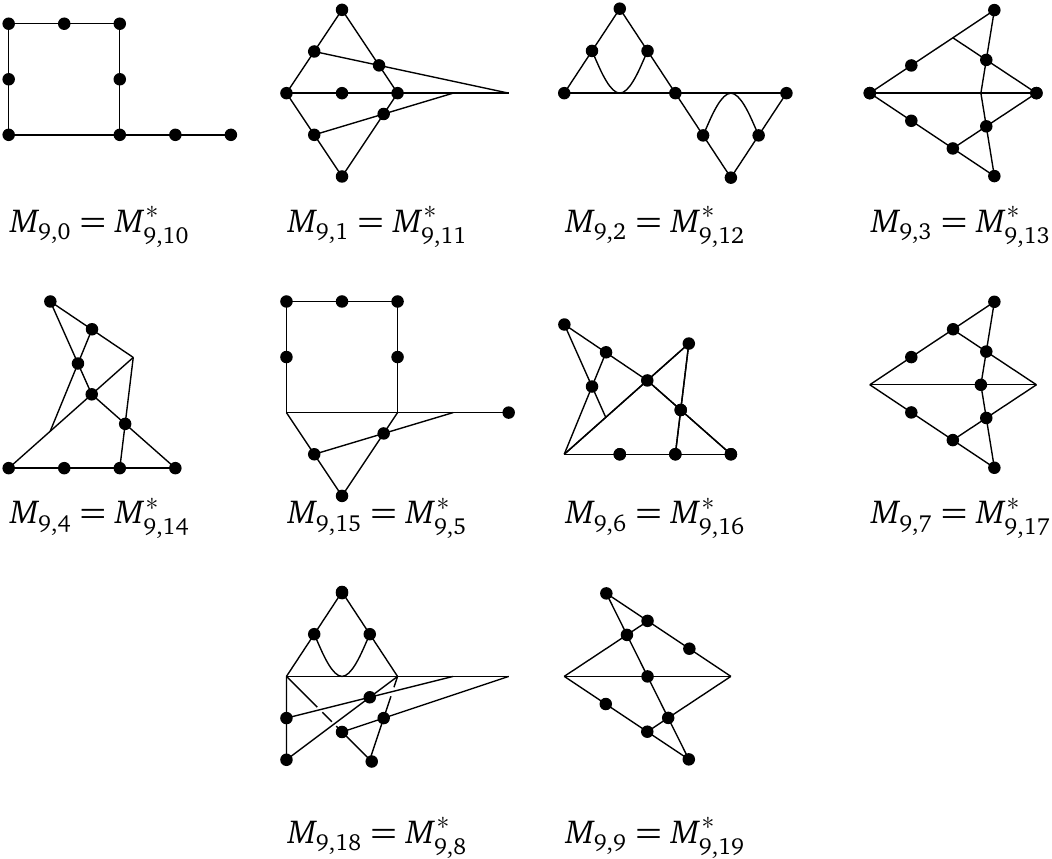}
	\caption{The 3-connected matroids in $\mathcal{M}_5$ on 9 elements.}
	\label{fig:H5_9}
\end{figure}

\begin{figure}[htbp]
  \centering
    \includegraphics[scale=1]{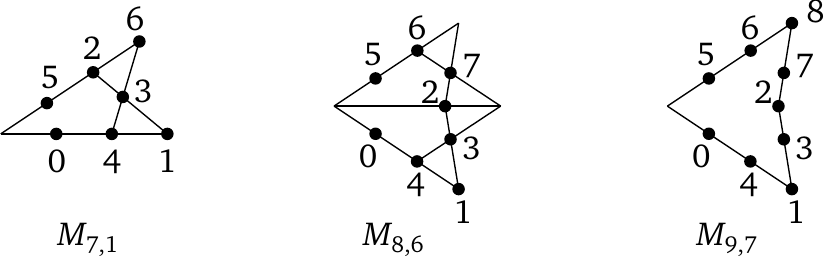}
  \caption{The matroids $M_{7,1}$, $M_{8,6}$, and $M_{9,7}$. In the right-most diagram, the 2-point lines were omitted to emphasize the fan $(1,3,2,7,8)$. Note that the labelings of $M_{8,6}$ and $M_{9,7}$ differ from the labelings in the appendix.}
  \label{fig:M71}
\end{figure}

\newpage

\subsection{The main proof}

We prove Theorem \ref{thm:H5characterization}, which we repeat here for convenience:

\begin{theorem}\label{thm:H5characterizationrepeat}
	Let $M' \in \mathcal{M}_5$ be 3-connected. Then $M'$ is isomorphic to a matroid $M$ for which one of the following holds:
	\begin{enumerate}
		\item\label{it:H51} $M$ has one of $X_8, Y_8, Y_8^*$ as a minor;
		\item\label{it:H52} $M \in \{U_{2,6}, U_{3,6}, U_{4,6}, P_6, M_{9,9}, M_{9,9}^*\}$;
		\item\label{it:H53} $M$ or $M^*$ can be obtained from $U_{2,5}$ (with groundset $\{a,b,c,d,e\}$) by gluing wheels to $(a,c,b), (a,d,b), (a,e,b)$;
		\item\label{it:H54} $M$ or $M^*$ can be obtained from $U_{2,5}$ (with groundset $\{a,b,c,d,e\}$) by gluing wheels to $(a,b,c), (c,d,e)$;
		\item\label{it:H55} $M$ or $M^*$ can be obtained from $M_{7,1}$ (labeled as in Figure \ref{fig:M71}) by gluing a wheel to $(1,3,2)$.
	\end{enumerate}
\end{theorem}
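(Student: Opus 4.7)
The plan is to reduce the characterization to a structured case analysis built on a finite computer enumeration of the small $3$-connected members of $\mathcal{M}_5$. Let $M \in \mathcal{M}_5$ be $3$-connected. If $|E(M)| \leq 9$, then the classification follows immediately from the enumeration depicted in Figures \ref{fig:H5_5}--\ref{fig:H5_9}: each such matroid either contains $X_8$, $Y_8$, or $Y_8^*$ as a minor (case (i)), lies in the sporadic list of case (ii), or is a small instance of one of the wheel-gluing constructions (iii)--(v). All remaining work concerns $|E(M)| \geq 10$.

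For the enumeration itself, I would invoke Theorem \ref{lem:hydra5} and work in the product ring $\bigotimes_{i=1}^6 \GF{5}$, seeded by the unique (up to coordinate permutation) representation of $U_{2,5}$, which is a stabilizer for the $\GF{5}$-representable matroids. Lemma \ref{lem:58} restricts the set of fundamental elements that can appear to those whose entries are balanced across the six coordinates, shrinking the search enough that an exhaustive element-by-element extension --- tested for $3$-connectivity and strict $\{U_{2,5}, U_{3,5}\}$-fragility at each step --- terminates quickly. This is precisely what the appendix automates.

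For $|E(M)| \geq 10$, the small-case enumeration identifies a short list of ``anchor'' matroids $N$ that must appear as a minor of any $M$ avoiding cases (i) and (ii). Each anchor carries an explicit disjoint family $\mathcal{F}_N$ of fans matching the wheel-gluing pattern of the target case: three fans sharing a common pair for (iii), two fans sharing a triangle for (iv), and a single fan extending $(1,3,2)$ for (v). Because the ultimate base matroids $U_{2,5}$ and $M_{7,1}$ are themselves too small to display these fans, I would apply the Splitter Theorem twice to bump $M$ up to a slightly larger fixed minor (e.g.\ a $9$-element matroid such as $M_{9,7}$ in case (v)) on which the prescribed fans are visible, and then invoke Lemma \ref{lem:fanlemma}. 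This reduces the fan-extension verification to checking, for each anchor, that every $3$-connected member of $\mathcal{M}_5$ containing that anchor as a minor on at most two additional elements is itself already a fan-extension --- a finite check delegated to appendix lemmas, one for each of the four invocations of the fan machinery.

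Once $M$ is known to be a fan-extension of the appropriate anchor, I apply Lemma \ref{lem:wheelglue} to convert the fan-extension description into the wheel-gluing description. The hypothesis that every minor of $M$ with $N$ as a minor is $3$-connected up to series and parallel sets follows from $\mathcal{M}_5$ being minor-closed together with standard arguments for strictly fragile matroids. A direct computation of $\core(N)$ from Definition \ref{def:core} yields $U_{2,5}$ for the anchors of cases (iii) and (iv) (after absorbing any parallel class the construction introduces) and $M_{7,1}$ for the anchor of case (v), matching the theorem statement. The main obstacle is organizational rather than conceptual: identifying the correct anchors and their fan families so that the enumeration at $|E(M)| = 9$ actually funnels every remaining matroid into exactly one of the four fan-extension computations, handling duality so that the $M^*$ variants are absorbed into the same verification rather than checked separately, and ensuring that the appendix's finite case checks exhaust every fan-shortening possibility permitted by Proposition \ref{court} inside each anchor.
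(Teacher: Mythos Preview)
Your proposal follows essentially the same architecture as the paper's proof: enumerate the $3$-connected members of $\mathcal{M}_5$ up to $9$ elements, then for $|E(M)|\geq 10$ anchor $M$ to one of the $9$-element matroids $M_{9,0},\ldots,M_{9,19}$ and dispatch each anchor via either a splitter argument or Lemma~\ref{lem:fanlemma} followed by Lemma~\ref{lem:wheelglue}, with the core computation recovering $U_{2,5}$ or $M_{7,1}$.

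The one place where your write-up diverges from the paper is the role of the two Splitter Theorem applications. You describe them as ``bump-up'' moves from $U_{2,5}$ or $M_{7,1}$ to a $9$-element anchor on which the fans are visible. In the paper, the two Splitter Theorem instances are instead used \emph{terminally}: Lemmas~A.3 and~A.8 show that $M_{9,9}$ and $M_{9,1}$ (respectively) have no proper $3$-connected extension in $\mathcal{M}_5$ (subject to the earlier exclusions), so any $M$ with one of these as a minor must equal it. Your formulation leaves a small gap here: saying $M$ ``avoids case~(ii)'' only means $M\notin\{U_{2,6},\ldots,M_{9,9},M_{9,9}^*\}$, which is automatic for $|E(M)|\geq 10$; it does not by itself rule out $M$ having $M_{9,9}$ as a proper minor, and that case is not fed into any of your four fan-extension anchors. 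The fix is exactly what the paper does---redirect the two Splitter Theorem uses to show $M_{9,9}$ and $M_{9,1}$ are splitters---after which your outline and the paper's proof coincide.
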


\begin{proof}
	By examining the matroids in the previous subsection, it is straightforward to check that the theorem holds for all matroids on at most 9 elements. For instance, to obtain $Q_6$ we glue a rank-3 wheel to the triangle $(a,c,b)$, and to obtain $M_{8,6}$ we glue a rank-3 wheel to the triangle $(1,3,2)$ of $M_{7,1}$ (using the labeling of Figure \ref{fig:M71}).
	
	Let $M \in \mathcal{M}_5$ be a minor-minimal counterexample. We assume $M$ has at least one of the matroids $\{M_{9,0}, \ldots, M_{9,19}\}$ as a minor. We go through these case by case. Since $M$ does not satisfy $\eqref{it:H51}$, $M$ can not have a minor in $\{M_{9,3}, M_{9,13}, M_{9,4}, M_{9,14}, M_{9,6}, M_{9,16}\}$. 
	
	Assume $M$ has $M_{9,9}$ as a minor. By Lemma A.3, $M$ is isomorphic to $M_{9,9}$, a contradiction. This rules out $M_{9,19}$ as well.
	
	Next, suppose that $M$ has $M_{9,18}$ as a minor. By Lemma A.4, $M$ is a fan-extension of $M_{9,18}$ with respect to $\mathcal{F} = \{(4,6,8), (3,2,5), (0,1,7)\}$. Let $M' := \core(M_{9,18})$. Then $E(M') = \{a_1,b_1,c_1,a_2,b_2,c_2,a_3,b_3,c_3\}$, with $\{a_1, a_2, a_3\}$ and $\{c_1, c_2, c_3\}$ parallel classes, and $\si(M') \cong U_{2,5}$. Hence Lemma \ref{lem:wheelglue}, with $N = M_{9,18}$ and $\mathcal{F}_N = \mathcal{F}^+ = \mathcal{F}$, implies that $M$ satisfies Case \eqref{it:H53} (where we use that, in order to get a 3-connected matroid, at least two elements from each parallel class must be deleted anyway). Using duality, it follows that we may assume from now on that $M$ has no minor isomorphic to $M_{9,8}$ or $M_{9,18}$.
	
	Next, suppose that $M$ has $M_{9,7}$ as a minor. By Lemma A.5, $M$ is a fan-extension of $M_{9,7}$ with respect to $\mathcal{F} = \{(6,5,2,3,8)\}$ (labeled as in the appendix). $\core(M_{9,7})$ is, up to relabeling, equal to $M_{7,1}$, so Lemma \ref{lem:wheelglue}, with $N = M_{9,7}$ and $\mathcal{F}_N = \mathcal{F}^+ = \mathcal{F}$, implies that $M$ satisfies Case \eqref{it:H55}. Using duality, it follows that we may assume from now on that $M$ has no minor isomorphic to $M_{9,7}$ or $M_{9,17}$.
	
	Next, suppose that $M$ has $M_{9,15}$ as a minor. By Lemma A.6, $M$ is a fan-extension of $M_{9,15}$ with respect to $\mathcal{F} = \{(7,1,0,6,8),(3,2,5)\}$.  Let $M' := \core(M_{9,15})$. Then $E(M') = \{a_1,b_1,c_1,a_2,b_2,c_2,4\}$, with $\{a_1, a_2\}$, $\{c_1, c_2\}$ parallel pairs, and  $\si(M') \cong U_{2,5}$. It follows from Lemma \ref{lem:wheelglue} again that $M$ satisfies Case \eqref{it:H53}. Using duality, it follows that we may assume from now on that $M$ has no minor isomorphic to $M_{9,15}$ or $M_{9,5}$.
	
	Next, suppose that $M$ has $M_{9,2}$ as a minor. By Lemma A.7, $M$ is a fan-extension of $M_{9,2}$ with respect to the fans $\mathcal{F} = \{(6,5,2,3), (4,0,1,7)\}$. Let $M' := \core(M_{9,2})$. Then $E(M') = \{a_1,b_1,c_1,a_2,b_2,c_2\}$, with $\{c_1 ,c_2\}$ a parallel class, and  $\si(M') \cong U_{2,5}$. It follows from Lemma \ref{lem:wheelglue} again that $M$ satisfies Case \eqref{it:H54}. Using duality, it follows that we may assume from now on that $M$ has no minor isomorphic to $M_{9,2}$ or $M_{9,12}$.
	
	Next, suppose that $M$ has $M_{9,1}$ as a minor. By Lemma A.8, it follows that $M = M_{9,1}$. Using duality, it follows that we may assume from now on that $M$ has no minor isomorphic to $M_{9,1}$ or $M_{9,11}$.
	
	Finally, suppose that $M$ has $M_{9,0}$ as a minor. By Lemma A.9, either $M$ has $M_{8,5}$ as a minor or $M$ is a fan-extension of $M_{9,0}$ with respect to $\mathcal{F} = \{(1,3,2,5,6,7,8)\}$. In the former case, $M$ must have one of $M_{9,2}, M_{9,4}, M_{9,6}, M_{9,9}, M_{9,12}, M_{9,14}, M_{9,16}$, or $M_{9,19}$ as a minor, all of which were covered before. In the latter case, we argue as before to show that $M$ satisfies Case \eqref{it:H53}. Using duality, $M$ can have neither of $M_{9,0}$ and $M_{9,10}$ as a minor, and this completes the proof.	
\end{proof}

Let us spend a few words on the proof of Corollary \ref{thm:U2characterization}, which we repeat here for convenience.

\begin{corollary}\label{thm:U2characterizationrepeat}
	Let $M' \in \mathcal{M}_2$ be 3-connected. Then $M'$ is isomorphic to a matroid $M$ for which one of the following holds:
	\begin{enumerate}
		\item\label{it:U21} $M$ has one of $X_8, Y_8, Y_8^*$ as a minor;
		\item\label{it:U22} $M \in \{M_{9,9}, M_{9,19}\}$;
		\item\label{it:U23} $M$ or $M^*$ can be obtained from $U_{2,5}$ (with groundset $\{a,b,c,d,e\}$) by gluing wheels to $(a,c,b), (a,d,b), (a,e,b)$;
		\item\label{it:U24} $M$ or $M^*$ can be obtained from $U_{2,5}$ (with groundset $\{a,b,c,d,e\}$) by gluing wheels to $(a,b,c), (c,d,e)$;
		\item\label{it:U25} $M$ or $M^*$ can be obtained from $M_{7,1}$ (labeled as in Figure \ref{fig:M71}) by gluing a wheel to $(1,3,2)$.
	\end{enumerate}
\end{corollary}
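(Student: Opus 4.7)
The plan is to derive this corollary directly from Theorem \ref{thm:H5characterizationrepeat} by removing from each case those matroids that fail to be $\mathbb{U}_{2}$-representable. Because the partial-field homomorphism $\phi\colon\mathbb{U}_{2}\to\mathbb{H}_{5}$ gives an inclusion $\mathcal{M}_{2}\subseteq\mathcal{M}_{5}$, every 3-connected $M'\in\mathcal{M}_{2}$ satisfies one of the five conclusions of Theorem \ref{thm:H5characterizationrepeat}, and it remains only to decide, within each case, which matroids actually lie in $\mathcal{M}_{2}$.

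Cases \eqref{it:U21}, \eqref{it:U23}, \eqref{it:U24}, and \eqref{it:U25} should transfer essentially unchanged. Case \eqref{it:U21} makes no reference to the base partial field. For cases \eqref{it:U23}--\eqref{it:U25}, I would verify that the families are already contained in $\mathcal{M}_{2}$: each of $U_{2,5}$, $M_{7,1}$, and every wheel $W_{n}$ admits a $\mathbb{U}_{2}$-representation, and the generalized parallel connection along a modular triangle preserves representability over any partial field, so every wheel-gluing is $\mathbb{U}_{2}$-representable; the deletion step in Definition \ref{def:wheelglue} is harmless, and strict $\{U_{2,5},U_{3,5}\}$-fragility is inherited from membership in $\mathcal{M}_{5}$.

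The substantive work is pruning case \eqref{it:U22}. I would check, one matroid at a time, which of $U_{2,6}$, $U_{3,6}$, $U_{4,6}$, $P_{6}$, $M_{9,9}$, and $M_{9,9}^{*}$ admit a $\mathbb{U}_{2}$-representation. The first four fail: the supply of fundamental elements in $\mathbb{U}_{2}$ is too restricted to accommodate six points in general position in rank $2$ or $3$, so $U_{2,6}$, $U_{3,6}$, $U_{4,6}$, and $P_{6}$ drop out. By contrast, $M_{9,9}$ and its dual (which, after accounting for the labeling conventions of the figures, coincides with $M_{9,19}$) are $\mathbb{U}_{2}$-representable. Verifying this directly, by producing a $\mathbb{U}_{2}$-matrix for $M_{9,9}$, is the main obstacle; in the spirit of the rest of the report I would resolve it computationally, either by exhibiting the matrix or by consulting an enumeration of small $\mathbb{U}_{2}$-representable matroids analogous to the one in Figures \ref{fig:H5_5}--\ref{fig:H5_9}. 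With these determinations in hand, case \eqref{it:U22} collapses to $\{M_{9,9}, M_{9,19}\}$, and the corollary follows.
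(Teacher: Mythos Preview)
Your approach is essentially the one the paper takes: deduce the corollary from Theorem~\ref{thm:H5characterizationrepeat} via the inclusion $\mathcal{M}_2\subseteq\mathcal{M}_5$, observe that gluing wheels (generalized parallel connection with a regular matroid) preserves $\mathbb{U}_2$-representability so cases~\eqref{it:U23}--\eqref{it:U25} carry over, and then prune the sporadic list in case~\eqref{it:U22}.

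The only noteworthy difference is in how the individual checks are handled. You propose to exhibit or compute $\mathbb{U}_2$-matrices for $M_{9,9}$ and $M_{7,1}$ and to argue non-representability of $U_{2,6}$, $P_6$, $U_{4,6}$ by a loose count of fundamental elements. The paper instead invokes generalized $\Delta$--$Y$ reducibility: $X_8$, $Y_8$, $Y_8^*$, $M_{9,9}$, $M_{9,19}$, and $M_{7,1}$ are all $\Delta$--$Y$ reducible to $U_{2,5}$, hence $\mathbb{U}_2$-representable, while $P_6$ and $U_{4,6}$ are $\Delta$--$Y$ reducible to $U_{2,6}$, hence not. This gives a cleaner, uniform justification and avoids producing explicit matrices. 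Your ``too few fundamental elements'' remark is correct in spirit for $U_{2,6}$ but is not on its own an argument for $P_6$ or $U_{4,6}$; you would still need either a direct check or the $\Delta$--$Y$ reduction to $U_{2,6}$ that the paper uses.
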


\begin{proof}[Sketch of proof]
  Gluing wheels to triangles is done through generalized parallel connection of a matroid with a wheel. Since the latter is regular, this operation preserves representability over a partial field. Since $U_{2,5}$ is $\mathbb{U}_2$-representable, so are all the matroids in Cases \eqref{it:U23}, \eqref{it:U24}. One can check that each of $X_8, Y_8, Y_8^*, M_{9,9}, M_{9,19}, M_{7,1}$ is representable over $\mathbb{U}_2$, since these matroids are generalized $\Delta-Y$ reducible to $U_{2,5}$ (see \cite{OSV00}). 
  
  Conversely, $U_{2,6}$ is \emph{not} representable over $\mathbb{U}_2$, and therefore neither are $P_6, U_{4,6}$ (which are $\Delta-Y$ reducible to $U_{2,6}$).  Hence the result follows.
\end{proof}

\subsection{Some additional checks}
The matroid $X_8$ has a 4-element segment, $S$, and a 4-element cosegment, $C$. The main result in \cite{CMWZ} states that all matroids in $\mathcal{M}_5$ having a minor in $\{X_8,Y_8,Y_8^*\}$ can be obtained from $X_8$ through a \emph{path sequence}: repeated generalized $\Delta-Y$ exchanges on $S$ and $C$, possibly with parallel or series extensions beforehand. Additionally, one can glue a wheel onto appropriate 3-element subsets of $S$ or $C$, as outlined above.

\begin{lemma}\label{lem:pathsequence}
  Let $M \in \mathcal{M}_5$ be 3-connected with $|E(M)| \leq 12$, such that $M$ has a minor in $\{X_8,Y_8,Y_8^*\}$. Then $M$ can be obtained from a path sequence.
\end{lemma}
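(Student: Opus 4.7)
The plan is to reduce this to a finite computer enumeration. Since $|E(M)| \le 12$ and $M$ must contain one of the $8$-element matroids $X_8, Y_8, Y_8^*$, the enumeration ranges over matroids of size $8, 9, 10, 11, 12$. I would proceed in two parallel steps: (i) build the complete list $\mathcal{L}_1$ of all $3$-connected matroids in $\mathcal{M}_5$ of size at most $12$ having a minor in $\{X_8, Y_8, Y_8^*\}$; (ii) build the complete list $\mathcal{L}_2$ of all matroids obtainable from $X_8$ by a path sequence, truncated at $12$ elements. The lemma then reduces to verifying $\mathcal{L}_1 \subseteq \mathcal{L}_2$ up to isomorphism.

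For step (i), I would work in the product ring $\bigotimes_{i=1}^{6}\GF{5}$ exactly as set up in the preceding subsection, exploiting the fact that $U_{2,5}$ is a stabilizer for the $\GF{5}$-representable matroids and therefore that each matroid in $\mathcal{M}_5$ with a $U_{2,5}$-minor has a uniquely determined representation up to the action on coordinates. Starting from the fixed representations of $X_8, Y_8, Y_8^*$, I would repeatedly form all single-element extensions and coextensions using only fundamental elements of $\mathbb{H}_5$ whose image in $\bigotimes_{i=1}^{6}\GF{5}$ satisfies the restriction of Lemma \ref{lem:58} (each of the values $2,3,4$ appearing in exactly two positions), testing $3$-connectivity and strict $\{U_{2,5},U_{3,5}\}$-fragility at each stage and discarding isomorphic copies via canonical-form comparison in SageMath.

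For step (ii), I would implement the path-sequence construction directly: starting from $X_8$ with its distinguished $4$-segment $S$ and $4$-cosegment $C$, iteratively apply a generalized $\Delta\textrm{-}Y$ exchange on $S$ or $C$ (possibly preceded by a parallel or series extension on the relevant segment/cosegment), and at each stage also allow gluing a small wheel onto a $3$-element subset of $S$ or $C$ as described in \cite{CMWZ}. Since the only operations that increase the ground-set size add one element at a time, branching is bounded, and truncating at $12$ elements keeps the search finite. The resulting list $\mathcal{L}_2$ is then compared with $\mathcal{L}_1$ via isomorphism tests.

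The main obstacle is computational rather than conceptual: the number of candidate $11$- and $12$-element extensions can balloon if the fundamental-element restriction is not enforced early, so the filtering from Lemma \ref{lem:58} must be baked into the extension routine itself rather than applied after the fact. A secondary difficulty is ensuring that the path-sequence generator faithfully encodes all legal orderings of $\Delta\textrm{-}Y$ exchanges and wheel-gluings; I would mitigate this by normalizing each intermediate matroid to its canonical label before continuing, so that equivalent sequences are collapsed and the exhaustive enumeration terminates in reasonable time. Once both lists are produced, the containment $\mathcal{L}_1 \subseteq \mathcal{L}_2$ is checked by a single pass of canonical-form lookups, and the conclusion follows.
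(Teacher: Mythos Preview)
Your proposal is correct and matches the paper's approach: the paper defers the entire proof to a SageMath computation (Lemma~A.11 in the appendix), which performs exactly the two-sided enumeration you describe—generating the $3$-connected fragile $\mathbb{H}_5$-matroids with an $\{X_8,Y_8,Y_8^*\}$-minor up to size $12$ via extension/coextension in the product ring, generating the path-sequence matroids up to the same bound, and checking containment by isomorphism. Your remarks about enforcing the cross-ratio restriction of Lemma~\ref{lem:58} early and canonicalising intermediate matroids are the same practical considerations that make the computation feasible.
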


Not all matroids obtained from a path sequence have one of the minors listed above, but the following is true:

\begin{lemma}\label{lem:pathseqminor}
  Let $M \in \mathcal{M}_5$ be obtained from a path sequence, with $|E(M)| \leq 12$. Then $M$ has a minor in $\{X_8,Y_8,Y_8^*,M_{8,6}\}$.
\end{lemma}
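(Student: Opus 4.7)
The plan is to combine a short structural observation with a bounded computer enumeration, in the spirit of the other verification lemmas in this report. Every path sequence begins at $X_8$, so the initial matroid trivially has $X_8$ as a minor, and after a single generalized \dy{} exchange on the segment $S$ or the cosegment $C$ we obtain $Y_8$ or $Y_8^*$. The key additional base observation is that gluing a rank-$3$ wheel to a 3-element subset of $S$ (or, dually, of $C$) of $X_8$ produces an $11$-element matroid containing $M_{8,6}$ as a minor; this can be verified by a single SageMath computation, and explains why $M_{8,6}$ must appear in the target set.

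Next I would set up an induction on the length of the path sequence. At each stage I would split into two cases: either (i) only parallel/series extensions and \dy{} exchanges on $4$-element segments/cosegments have been applied so far, in which case one of $\{X_8,Y_8,Y_8^*\}$ survives as a minor because each such operation is reversed by an appropriate delete/contract of the newly added element followed by the inverse \dy{} exchange, or (ii) a wheel of rank at least $3$ has already been glued to a triangle inside a segment/cosegment, in which case the base observation above yields $M_{8,6}$ as a minor, and this minor is preserved by all subsequent operations of the path sequence (parallel/series extensions and further gluings add elements without deleting old ones, while a \dy{} exchange on $S$ or $C$ acts away from the glued gadget).

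The main obstacle is verifying case (i) rigorously: after several extensions interleaved with \dy{} exchanges on extended segments, it is not obvious by inspection that one of $X_8$, $Y_8$, $Y_8^*$ persists as a minor of the resulting matroid. To avoid a delicate hand argument I would offload this to the appendix, enumerating every path sequence that yields a matroid in $\mathcal{M}_5$ on at most $12$ elements (so at most four elementary operations beyond $X_8$) using SageMath with the \texttt{matroids} component, and testing each resulting matroid for a minor in $\{X_8,Y_8,Y_8^*,M_{8,6}\}$. Because the bound $|E(M)|\leq 12$ limits both the number of elementary operations and the branching from each, the search tree is small and manageable, mirroring the approach already used for Lemmas A.1 through A.9.
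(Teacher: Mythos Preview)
Your plan lands on the same verification the paper performs: the paper's proof is purely computational (Lemma~A.10 in the appendix), enumerating in SageMath all matroids obtainable from a path sequence on at most $12$ elements and testing each for a minor in $\{X_8,Y_8,Y_8^*,M_{8,6}\}$, with no structural argument at all. Since your final paragraph proposes exactly this enumeration, your approach is essentially the paper's.

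One caution about the structural preamble: do not rely on the case~(ii) hand argument to shortcut any part of the enumeration. Gluing a wheel to a triple in $S$ destroys part of that segment, so the ``current'' segment on which a later $\Delta$-$Y$ exchange is performed need not be disjoint from the glued piece, and there is no general principle guaranteeing that an $M_{8,6}$ minor survives a $\Delta$-$Y$ exchange that meets it; also, your ``$11$-element'' count ignores the mandatory deletion of $b_i$ in Definition~\ref{def:wheelglue}. These imprecisions are harmless provided you really do enumerate \emph{every} path sequence as you say, but they mean the case split is motivational rather than load-bearing.
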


The proofs of these are in the appendix, as Lemmas A.11 and A.10, respectively.

%%%%%%%%%%%%%%%%%%%%%%%%%%%%%%%%%%%%%%
% Ingredients: move to better location
%%%%%%%%%%%%%%%%%%%%%%%%%%%%%%%%%%%%%%

% If $M = M[A]$ is a represented matroid, and $X \subseteq E(M)$, then we denote by $\langle X \rangle$ the linear subspace spanned by the columns of $A$ indexed by $X$. We interpret subspaces as projective spaces, so a \emph{point} is a one-dimensional space, and a \emph{line} is a 2-dimensional space.
% 
% \begin{lemma}\label{lem:repfanguts}
%   Let $M$ be a 3-connected matroid, $A$ a matrix such that $M = M[A]$, and $F = (e_1, \ldots, e_k)$ a fan of $M$ with $k \geq 3$. We have the following.
%   \begin{itemize}
%     \item $\{e_1,e_2\}$ and $\{e_{n-1},e_n\}$ each span one point on the line $\langle F \rangle \cap \langle E(M) - F \rangle$, and these points are distinct.
%     \item The rim elements of $F$ span one point on the line $\langle F \rangle \cap \langle E(M) - F \rangle$, which is distinct from the previous two points.
%   \end{itemize}
% \end{lemma}
% 
% \begin{proof}
%   If $F$ is a triad then the proof is straightforward, so assume $F$ contains a triangle. Contract all but one of the rim elements to turn $F$ into a triangle plus parallel pairs. The remaining rim element is our pal.
% \end{proof}

\bibliographystyle{plain}
\bibliography{../matbib2012}

\appendix

\section{The code}
The code below was run on SageMath 6.5. It should work with newer versions of SageMath too. In this report we divided the code into a few sections. Ready-to-run files can be obtained from \url{http://www.math.lsu.edu/~svanzwam/pdf/fragilecomputations.zip}

\subsection{Product rings}
SageMath Version 6.5 has no support for product rings. The code below is a fairly minimal implementation. We followed the documentation at \url{http://www.sagemath.org/doc/thematic_tutorials/coercion_and_categories.html}. The following is the content of the file \verb\product_ring.py\.

\lstinputlisting{anc/product_ring.py}
%\inputminted[fontsize=\footnotesize]{sage}{product_ring.py}
The notebook containing all our computations starts on the next page.

\includepdf[pages={-}]{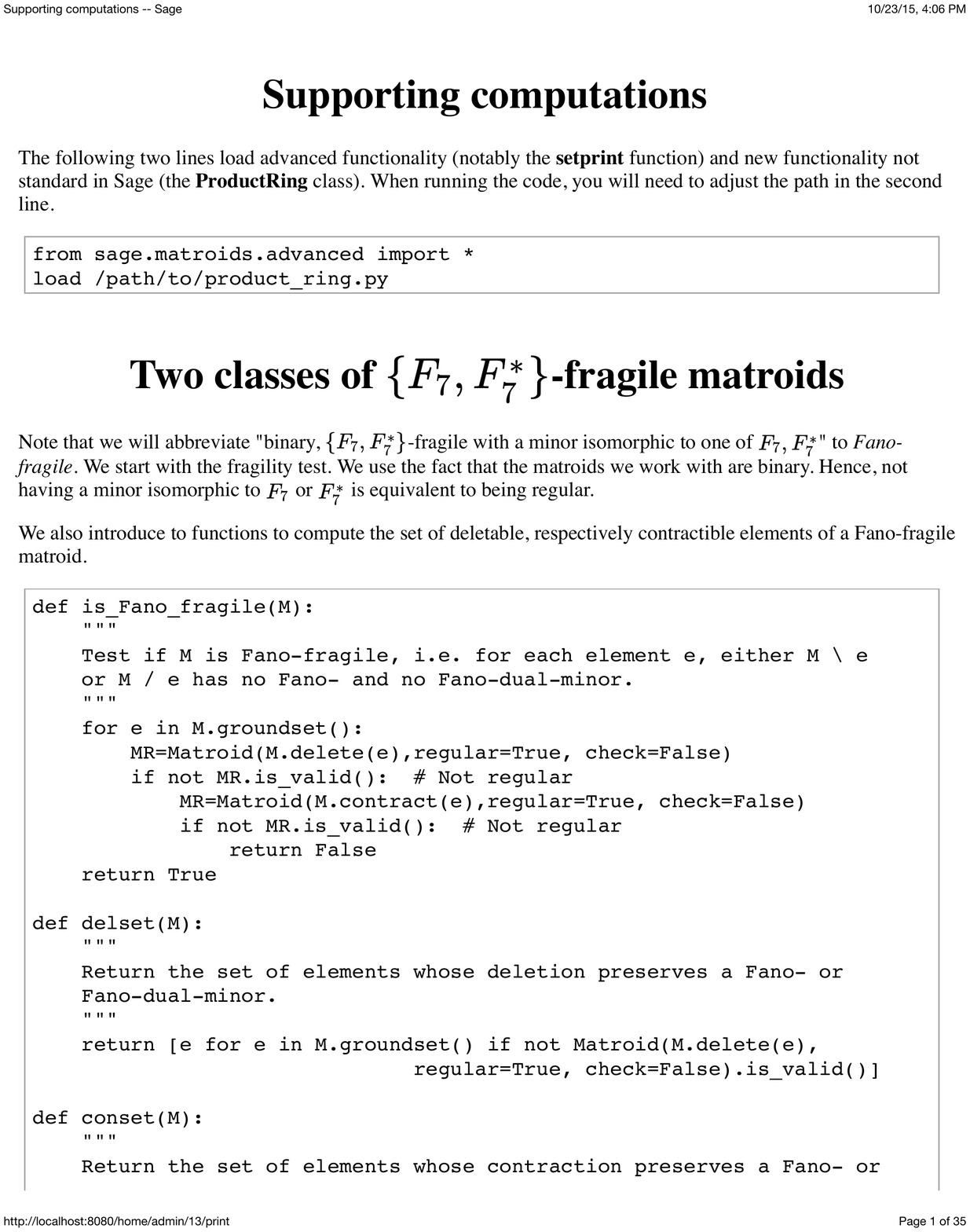}

\end{document}